\documentclass[11pt]{article}

\usepackage[a4paper,margin=1.1in]{geometry}
\usepackage{amsmath,amssymb,amsthm,mathtools}
\usepackage[numbers,sort&compress]{natbib}
\usepackage[colorlinks=true,linkcolor=blue,citecolor=blue,urlcolor=blue]{hyperref}
\usepackage[capitalize,nameinlink]{cleveref}

\theoremstyle{plain}
\newtheorem{theorem}{Theorem}[section]
\newtheorem{lemma}[theorem]{Lemma}
\newtheorem{fact}[theorem]{Fact}
\newtheorem{corollary}[theorem]{Corollary}
\newtheorem{proposition}[theorem]{Proposition}

\newtheorem{conjecture}[theorem]{Conjecture}

\theoremstyle{definition}
\newtheorem{definition}[theorem]{Definition}

\newtheoremstyle{boldremark}
  {3pt}{3pt}{\normalfont}{}{\bfseries}{.}{.5em}{}
\theoremstyle{boldremark}
\newtheorem{remark}[theorem]{Remark}

\newcommand{\bbR}{\mathbb{R}}

\newcommand{\Aut}{\operatorname{Aut}}
\newcommand{\supp}{\operatorname{supp}}
\newcommand{\1}{\mathbf{1}}
\newcommand{\dd}{\,\mathrm{d}}

\newcommand{\Sub}{\operatorname{Sub}}

\title{\texorpdfstring{$L^p$}{Lp}-form of the KNRS conjecture}
\author{Yuqi Zhao\thanks{Email: \texttt{yuqi.zhao012@gmail.com}.}}
\date{}

\begin{document}
\maketitle

\begin{abstract}
The Kohayakawa--Nagle--R\"odl--Schacht conjecture predicts that locally
dense graphs contain, asymptotically, at least as many homomorphic copies
of any fixed graph as the random graph of the same edge density. We prove
that every graph with at least one edge satisfies a natural $L^p$
relaxation of this conjecture in the graphon setting. More precisely, let
$F$ be a graph with $m>0$ edges, and let $n$ be the number of
non-isolated vertices of $F$. If
$$
        p\ge \binom {n}{2}/m,
$$
then for every $\rho$-locally dense graphon $W$,
$$
        t(F,W^{\circ p})\ge \rho^{pm}.
$$
Equivalently, if
$$
        W_F(\mathbf x)=\prod_{ij\in E(F)}W(x_i,x_j),
$$
then
$$
        \|W_F\|_{L^p}\ge \rho^{e(F)}.
$$
The proof is based on a H\"older uniformization over vertex relabellings,
in the spirit of Conlon--Lee. We also prove a more general comparison
principle with edge-transitive KNRS supergraphs, yielding sharper exponents
whenever $F$ embeds into an edge-transitive KNRS graph. Finally,
positive-semidefinite methods give theta-subdivision results:
Sidorenko-good graphs are closed under arbitrary uniform theta-subdivisions;
the non-uniform theta theorem of Im--Li--Liu admits a Sidorenko-good lift,
under the same divisibility assumptions, after removing the parity
restriction; and uniform theta-subdivisions of KNRS graphs are
regular-KNRS.
\end{abstract}

\section{Introduction}

A central theme in extremal graph theory is to determine the minimum
possible density of a fixed graph $H$ in a host graph of prescribed edge
 density. Sidorenko's conjecture \citep{Sidorenko1993,ErdosSimonovits1966}
asserts that, for every bipartite graph $H$, the random graph of edge
 density $p$ asymptotically minimizes the homomorphism density of $H$. In
 graphon language, this asks whether
\[
        t(H,W)\ge t(K_2,W)^{e(H)}
\]
holds for every graphon $W$ and every bipartite graph $H$.

For general graphs, one replaces Sidorenko's global edge-density condition
by a local-density condition. Kohayakawa, Nagle, R\"odl and Schacht
\citep{KNRS2010} proposed the following conjecture. In its finite form, it
says roughly that if, for every fixed \(\eta>0\), every vertex set of size
at least \(\eta N\) in an \(N\)-vertex host graph spans edge density at
least \(p-o(1)\), then every fixed graph \(H\) has homomorphism density at
least \(p^{e(H)}-o(1)\). This is the KNRS conjecture.

In graphon language, the local-density condition becomes exact: a graphon
\(W\) is \(\rho\)-locally dense if
\[
        \int_{S\times S} W(x,y)\dd x\dd y
        \ge
        \rho\lambda(S)^2
\]
for every measurable set \(S\subseteq[0,1]\). A graph \(H\) is called
KNRS if every \(\rho\)-locally dense graphon \(W\) satisfies
\[
        t(H,W)\ge \rho^{e(H)}.
\]

\begin{conjecture}[Kohayakawa--Nagle--R\"odl--Schacht]
\label{conj:knrs}
Every finite graph is KNRS. Equivalently, for every finite graph \(H\),
every \(\rho\in[0,1]\), and every \(\rho\)-locally dense graphon \(W\),
\[
        t(H,W)\ge \rho^{e(H)}.
\]
\end{conjecture}
Only restricted families of graphs are presently known to be KNRS. The
classical examples include complete multipartite graphs, and odd cycles
were proved by Reiher \citep{Reiher2014}. Lee obtained further examples,
including certain graphs built from cycles, trees, and tree-like gluings
\citep{Lee2021}. More recently, Brada\v{c}, Sudakov and Wigderson proved
new KNRS families via gluing operations, developed stability and forcing
versions, and introduced a regular variant connected to copositive and
positive-semidefinite kernels \citep{BradacSudakovWigderson2024}. See also
\citep{ChenLinMa2024,ImLiLiu2026} for recent results on subdivisions and
theta substitutions.

There is a useful parallel with the $L^p$ viewpoint on Sidorenko's
conjecture. Conlon and Lee \citep{ConlonLee2021} proved that for every
bipartite graph $H$ there exists a blow-up satisfying Sidorenko's
conjecture; one interpretation of their corollary is that every bipartite
$H$ satisfies an $L^p$ version of Sidorenko's conjecture for some $p$. More
recently, Im, Li and Liu \citep{ImLiLiu2026} used a closely related
H\"older uniformization in their proof of new Sidorenko graphs. The purpose
of the present paper is to show that this uniformization principle gives a
concise $L^p$ form of the KNRS conjecture for every graph.

For a graphon $W$ and a real $q>0$, write
\[
        W^{\circ q}(x,y)=W(x,y)^q
\]
for the $q$th Hadamard power. Our first main result is the following.

\begin{theorem}\label{thm:main}
Let $F$ be a graph with $m=e(F)>0$ edges, and let $n$ be the number of
non-isolated vertices of $F$. If $q\ge \binom n2/m$, then for every
$\rho\in[0,1]$ and every $\rho$-locally dense graphon $W$,
\[
        t(F,W^{\circ q})\ge \rho^{q e(F)}.
\]
\end{theorem}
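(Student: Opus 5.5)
The plan is to compare $F$ with the complete graph $K_n$, which is KNRS: complete (multipartite) graphs are among the classical KNRS examples recalled in the introduction. The comparison goes through a H\"older uniformization over all relabellings of the vertex set. First I would discard isolated vertices. They contribute a factor $1$ to $t(F,W^{\circ q})$, so I may assume $V(F)=[n]$ with $n\ge 2$. I also write $W_F(\mathbf x)=\prod_{ij\in E(F)}W(x_i,x_j)$ on $[0,1]^n$, so that $t(F,W^{\circ q})=\|W_F\|_{L^q}^q$.

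\emph{Step 1 (exact case $q=r:=\binom n2/m$).} For a permutation $\sigma\in S_n$, let $F^\sigma$ be the relabelled copy with edges $\{\sigma(i)\sigma(j): ij\in E(F)\}$. A fixed pair $\{a,b\}$ of $[n]$ is an edge of $F^\sigma$ for exactly $n!\,m/\binom n2=n!/r$ permutations $\sigma$. Hence, pointwise,
\[
W_{K_n}(\mathbf x)=\prod_{\sigma\in S_n} W_{F^\sigma}(\mathbf x)^{r/n!}.
\]
I then apply the generalized H\"older inequality with $n!$ factors, each with exponent $n!$, on the probability space $[0,1]^n$:
\[
t(K_n,W)=\int \prod_{\sigma} W_{F^\sigma}^{r/n!}
\le \prod_\sigma \Bigl(\int W_{F^\sigma}^{r}\Bigr)^{1/n!}.
\]
Each integral $\int W_{F^\sigma}^r$ equals $t(F,W^{\circ r})$, since relabelling coordinates preserves Lebesgue measure. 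Therefore $t(K_n,W)\le t(F,W^{\circ r})$. Combining this with the KNRS property of $K_n$, which gives $t(K_n,W)\ge\rho^{\binom n2}=\rho^{rm}$, yields the claim at $q=r$.

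\emph{Step 2 (all $q\ge r$).} Since $[0,1]^n$ has measure one, $L^p$ norms are nondecreasing in $p$. Hence $\|W_F\|_{L^q}\ge\|W_F\|_{L^r}\ge\rho^{m}$, where the last inequality is Step 1 rewritten as $t(F,W^{\circ r})^{1/r}\ge\rho^m$. Raising both sides to the power $q$ gives $t(F,W^{\circ q})\ge\rho^{qm}$.

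The only nontrivial input is that $K_n$ is KNRS in the exact graphon sense. I would cite this as classical, or quickly reprove it: by local density and a symmetrization/Cauchy--Schwarz argument, or via the known complete-multipartite result. The rest is bookkeeping. The main thing to check carefully is the counting identity in Step 1: every pair must be covered the same number of times, namely $n!/r$, and this is exactly what forces the threshold $q=\binom n2/m$.
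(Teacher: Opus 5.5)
Your proof is correct and is essentially the paper's argument: the H\"older averaging over all $n!$ relabellings is the H\"older comparison lemma with $J=K_n$ and $\Gamma=S_n$ (applied to $U=W^{\circ r}$), followed by the KNRS property of $K_n$. The only difference is how you pass to $q>\binom n2/m$. You use monotonicity of $\|W_F\|_{L^q}$ in $q$ on the probability space $[0,1]^n$. The paper instead runs the comparison directly at exponent $q$ and uses \cref{lem:power-locally-dense} to see that $W^{\circ\alpha}$, with $\alpha=qm/\binom n2\ge1$, is $\rho^\alpha$-locally dense; both steps are one-line Jensen arguments.
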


Let us spell out why this is an $L^q$-form of KNRS. For a fixed graph
$F$ and a graphon $W$, define the edge-product function on
$[0,1]^{V(F)}$ by
\[
        W_F(\mathbf x)=\prod_{ij\in E(F)} W(x_i,x_j).
\]
Then
\[
        t(F,W)=\|W_F\|_{L^1([0,1]^{V(F)})}
        \qquad\text{and}\qquad
        t(F,W^{\circ q})=\|W_F\|_{L^q([0,1]^{V(F)})}^q.
\]
Thus \cref{thm:main} is equivalently the norm inequality
\[
        \|W_F\|_{L^q([0,1]^{V(F)})}
        \ge
        \rho^{e(F)}
\]
for every $\rho$-locally dense graphon $W$, whenever
$q\ge \binom n2/e(F)$. When $q=1$, this is exactly the graphon KNRS
inequality for $F$; allowing larger $q$ gives the promised
$L^q$-relaxation.

The exponent in \cref{thm:main} comes from comparing $F$ with the clique
on the same non-isolated vertex set. The underlying argument is more
flexible. If the underlying simple support of $F$ is contained in an
edge-transitive KNRS graph $J$, then the clique may be replaced by $J$. For
a loopless graph $F$ with possible parallel edges, write $\supp(F)$ for the
simple graph obtained by keeping one copy of each edge.

\begin{theorem}\label{thm:comparison-intro}
Let $F$ be a loopless graph and let $J$ be a simple graph on the same vertex
set, with $e(F)>0$ and $\supp(F)\subseteq J$. Suppose that $J$ is KNRS
and that some subgroup $\Gamma\le \Aut(J)$ acts transitively on $E(J)$.
If $q\ge e(J)/e(F)$, then for every
$\rho$-locally dense graphon $W$,
\[
        t(F,W^{\circ q})\ge \rho^{q e(F)}.
\]
Moreover, if either $q>e(J)/e(F)$ or $J$ is KNRS-forcing, then equality
for $0<\rho<1$ holds if and only if $W=\rho\1$ a.e.
\end{theorem}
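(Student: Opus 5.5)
The plan is a Hölder uniformization over the automorphisms in $\Gamma$: average $F$ over its images under $\Gamma$ so that every edge of $J$ is covered equally often, then compare with $J$ and use that $J$ is KNRS. Set $N=|V(J)|$, $k=|\Gamma|$, $m=e(F)$, $e=e(J)$. For $\sigma\in\Gamma$ let $\sigma F$ be the relabelled multigraph; then $t(\sigma F,W^{\circ q})=t(F,W^{\circ q})$. Everything is computed on $[0,1]^{V(J)}$, where $W_F(\mathbf x)=\prod_{ij\in E(F)}W(x_i,x_j)$, so vertices isolated in $F$ do no harm. Since $\Gamma$ acts transitively on $E(J)$ and $\supp(F)\subseteq J$, each edge $f\in E(J)$ lies in exactly the same number of the images $\sigma F$, counted with multiplicity. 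Double counting makes this number equal to $km/e$. Hence
\[
\prod_{\sigma\in\Gamma} W_{\sigma F}(\mathbf x)=W_J(\mathbf x)^{km/e}.
\]

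Next apply Hölder's inequality for $k$ functions. Since $\sum_\sigma 1/k=1$,
\[
\int W_J^{m/e}=\int\prod_\sigma W_{\sigma F}^{1/k}\le\prod_\sigma\|W_{\sigma F}\|_{L^1}^{1/k}.
\]
To reach $L^q$, apply this to the graphon $W^{\circ q}$:
\[
\int W_J^{qm/e}\le t(F,W^{\circ q}).
\]
Now set $r=qm/e\ge 1$. Jensen's inequality gives $\int W_J^{r}\ge(\int W_J)^{r}=t(J,W)^r$. Because $J$ is KNRS, $t(J,W)^r\ge\rho^{er}=\rho^{qm}$, which is the claimed bound. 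Measurability and integrability are immediate since $0\le W\le 1$.

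For the equality case, suppose $0<\rho<1$ and equality holds. Then every inequality in the chain is tight. First suppose $q>e/m$, so $r>1$. Then $x\mapsto x^r$ is strictly convex, and equality in Jensen forces $W_J$ to be a.e. constant, equal to $\rho^{e}$. I then need to deduce $W=\rho\1$ a.e. from this. I would argue through local density: integrate $W_J$ over $S^{V(J)}$ and exploit edge-transitivity. A cleaner route is to note that $W_J\equiv c>0$ forces $W(x_i,x_j)>0$ a.e. Fixing all coordinates except those on one edge, the product structure then forces $W(x,y)=g(x)h(y)$-type separability, and iterating around $J$ forces $W$ to be constant. Local density with $S=[0,1]$ gives the constant $\ge\rho$, and tightness of $t(J,W)=\rho^e$ pins it to exactly $\rho$. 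This separability step is the one I expect to be the main obstacle. It needs $J$ to be connected or at least to have an edge, and it may require that $J$ contain a path of length $2$ or an odd cycle; when $J=K_2$ one instead uses that $W(x,y)$ constant a.e. is immediate.

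In the second equality case, $J$ is KNRS-forcing, and equality $t(J,W)=\rho^{e}$ directly yields $W=\rho\1$ a.e. by definition. The converse, that $W=\rho\1$ gives equality, is trivial.
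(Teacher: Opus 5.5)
Your inequality argument is correct. The H\"older/double-counting step is the paper's H\"older comparison lemma. After that step the paper shows that $W^{\circ\alpha}$ is $\rho^{\alpha}$-locally dense and applies the KNRS property of $J$ to $W^{\circ\alpha}$. You instead apply Jensen to the whole edge product, $t(J,W^{\circ r})\ge t(J,W)^r$, and use the KNRS property of $J$ only on $W$ itself. That is exactly the chain the paper reserves for its equality analysis when $\alpha>1$, so your route uses one chain throughout. Your treatment of the KNRS-forcing case is also fine: tightness gives $t(J,W)=\rho^{e}$, and forcing finishes it.

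The gap is in the equality case $q>e/m$. You correctly reach that $W_J$ is a.e.\ equal to the constant $\rho^{e}>0$. But the deduction that $W$ is then constant is only sketched (``iterating around $J$''), and you suspect it might need a path of length two or an odd cycle in $J$. No such hypothesis is needed, but an actual argument is required. The paper isolates this step as a separate lemma, valid for any graph with at least one edge and no isolated vertices.

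Here is one way to close it. Since $0\le W\le 1$, every factor satisfies $W(x_i,x_j)\ge W_J(\mathbf x)=c$, so $W\ge c>0$ a.e. Delete the isolated vertices of $J$ and fix a vertex $v$ with neighbours $u_1,\dots,u_d$. Compare the identity $W_J=c$ at $x_v=x$ and at $x_v=x'$, and cancel the common positive factors not involving $v$. This gives $\prod_k W(x,z_k)=\prod_k W(x',z_k)$ for a.e.\ $(x,x',z_1,\dots,z_d)$. Hence $R=W(x,\cdot)/W(x',\cdot)$ satisfies $R(z_1)\cdots R(z_d)=1$ in independent variables, which forces $R\equiv1$. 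So all rows of $W$ coincide, symmetry makes $W$ constant, and $t(J,W)=\rho^{e}$ pins the constant to $\rho$.

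Your separability idea also closes, provided you use the symmetry of $W$; that is what your parity worry overlooks. From $W(x,y)=g(x)h(y)$ and $W(x,y)=W(y,x)$ you get $W(x,y)=\kappa\,\phi(x)\phi(y)$. Then $W_J(\mathbf x)=\kappa^{e}\prod_v\phi(x_v)^{\deg_J(v)}$ is a product of functions of distinct coordinates, which is a.e.\ constant only if $\phi$ is. Either argument handles, for example, a perfect matching $J$, which contains no path of length two.
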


For example, taking $J=K_n$ gives \cref{thm:main}. If $F$ is bipartite
with parts of size $a$ and $b$, taking $J=K_{a,b}$ yields the sharper
sufficient condition $q\ge ab/e(F)$.

The positive-semidefinite part of the paper concerns three related notions.
A graph is \emph{regular-KNRS} if the KNRS inequality is tested only on
locally dense regular graphons. Sidorenko \citep{SidorenkoDNN2021} called a
graph \emph{good} if the Sidorenko inequality holds for all doubly
nonnegative kernels; we call such graphs \emph{Sidorenko-good} to avoid
confusion with the usual Sidorenko property for bipartite graphs. We shall
use the following implications, some of which already appear implicitly in
\cite{BradacSudakovWigderson2024,ChenLinMa2024,ConlonKimLeeLee2018}.

\begin{proposition}[Relations between the notions]
\label{prop:notion-relations}
The following implications hold.
\begin{enumerate}
    \item If a bipartite graph $G$ has the usual Sidorenko property, then
    $G$ is KNRS.
    \item If $G$ is KNRS, then $G$ is regular-KNRS.
    \item If $G$ is Sidorenko-good, then $G$ is regular-KNRS.
    \item If $G$ is regular-KNRS, then the $1$-subdivision $\Sub(G)$ is
    Sidorenko.
\end{enumerate}
Consequently,
\[
        \text{KNRS}\Longrightarrow \text{regular-KNRS}
        \Longrightarrow
        \text{$1$-subdivision is Sidorenko},
\]
and also
\[
        \text{Sidorenko-good}\Longrightarrow \text{regular-KNRS}
        \Longrightarrow
        \text{$1$-subdivision is Sidorenko}.
\]
\end{proposition}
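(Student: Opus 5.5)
We prove the four implications one at a time; the two displayed chains then follow by composing them. Throughout, "regular" means a graphon $W$ with constant degree function $d_W(x)=\int W(x,y)\dd y\equiv d$.

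\emph{Item 1.} Assume $W$ is $\rho$-locally dense. Taking $S=[0,1]$ gives $t(K_2,W)\ge\rho$. If $G$ is bipartite with the Sidorenko property, then
\[
t(G,W)\ge t(K_2,W)^{e(G)}\ge \rho^{e(G)},
\]
so $G$ is KNRS. Item 1 needs nothing more.

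\emph{Item 2.} This is immediate from the definitions. Regular-KNRS only requires the KNRS inequality on a subclass of $\rho$-locally dense graphons, and KNRS gives it on all of them.

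\emph{Item 3.} Here we use the fact, due to Brada\v{c}--Sudakov--Wigderson, that for a regular graphon local density is equivalent to positive semidefiniteness of the shifted kernel.
\begin{itemize}
\item Suppose $W$ is regular with degree $d$. Then $U=W-d$ satisfies $\int U(x,y)\dd y=0$ for all $x$.
\item For any $S$, write $\1_S=\lambda(S)+g$ with $\int g=0$. Then
\[
\int_{S\times S}W=d\lambda(S)^2+\langle g,Ug\rangle .
\]
\item Plugging in small sets shows that $\rho$-local density forces $d\ge\rho$ and forces $W-\rho$ to be copositive on mean-zero directions.
\end{itemize}
The cleanest route I expect is to show that $W-\rho\1$ is positive semidefinite, or else reduce to the case $W'=W-(d-\rho)$. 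That gives a doubly nonnegative kernel $K=W-\rho\1+\rho\1$ together with the nonnegative kernel $W$.

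With that in hand, the plan is as follows.
\begin{itemize}
\item Apply Sidorenko's goodness inequality $t(G,K)\ge t(K_2,K)^{e(G)}$ to the DNN kernel $K=W$. Note that $W$ itself is nonnegative; it is PSD after the rank-one shift because $W=\rho\1+(W-\rho\1)$ is a sum of PSD kernels.
\item Since $t(K_2,W)=d\ge\rho$, this yields $t(G,W)\ge\rho^{e(G)}$.
\end{itemize}
\textbf{Main obstacle.} The delicate step is proving that $W-\rho\1$ is PSD, rather than merely copositive, for regular $\rho$-locally dense $W$. I would try to use $\1_S$ for level sets of eigenfunctions, or cite the known characterization in \cite{BradacSudakovWigderson2024} directly.

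\emph{Item 4.} Let $B$ be a bipartite graphon on $X\sqcup Y$; this suffices by the standard reduction. Form the kernel on $X$ given by
\[
W(x,x')=\int B(x,y)B(x',y)\dd y .
\]
Then $t(\Sub(G),B)$ equals $t(G,W)$, up to normalization of the parts. The kernel $W$ is PSD and nonnegative, but it need not be regular.

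The plan for the regular case:
\begin{itemize}
\item Since $W\succeq0$, for every $S$ we have $\int_{S\times S}W\ge(\int\1_S\,W\,\1)^2/\int W$ by Cauchy--Schwarz in the $W$-inner product. For regular $W$ with degree $d$, this gives $\int_{S\times S}W\ge d\lambda(S)^2$.
\item So $W$ is $d$-locally dense, and $d=t(K_2,W)=t(P_3,B)\ge t(K_2,B)^2$.
\item Regular-KNRS then gives $t(\Sub(G),B)\ge t(K_2,B)^{2e(G)}$.
\end{itemize}
To handle non-regular $B$, I would use the standard Conlon--Kim--Lee--Lee reduction \cite{ConlonKimLeeLee2018}, which shows that it suffices to check Sidorenko for subdivisions on biregular $B$. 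Biregular $B$ makes $W$ regular.
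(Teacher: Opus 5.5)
Items 1 and 2 are the paper's argument. Item 4 is also the paper's argument in bipartite dress. You use the codegree kernel $BB^{T}$ of a biregular host and get local density from Cauchy--Schwarz in the $W$-inner product,
\[
\langle \1_S,W\1\rangle^2\le\langle\1_S,W\1_S\rangle\langle\1,W\1\rangle .
\]
The paper uses $U=W^{[2]}$ for a $p$-regular graphon and shows $U-p^2\1\succeq0$ by writing $f=f_0+c\1$. Both give the same local-density bound, and both rest on a cited regularity reduction. The paper's reduction to $p$-regular graphons (\citep[Lemma~2.2]{ImLiLiu2026}) already implies the biregular one you need. Item 3 follows the paper's route too: show $W-\rho\1\succeq0$, so $W$ is doubly nonnegative with $\|W\|_1=\rho$, then apply Sidorenko-goodness. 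However, you set it up over the wrong class of graphons.

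In the paper, regular-KNRS tests only graphons that are $\rho$-locally dense \emph{and $\rho$-regular}. So the degree is $d=\rho$, not an arbitrary $d\ge\rho$. For $d>\rho$, the step you call the main obstacle is false. Take $A=[0,\tfrac12]$, $B=(\tfrac12,1]$ and
\[
W=\varepsilon\1+(1-\varepsilon)\1_{(A\times B)\cup(B\times A)}.
\]
Then $W$ is $\varepsilon$-locally dense and $\tfrac{1+\varepsilon}{2}$-regular, yet $W-\varepsilon\1$ has eigenvalue $-\tfrac{1-\varepsilon}{2}$ on $\1_A-\1_B$. Your fallback $W-(d-\rho)\1$ does not help either: it is negative on $A\times A$ when $\varepsilon<\tfrac13$, and it is not $\varepsilon$-locally dense.

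With $d=\rho$ the obstacle disappears, and no eigenfunction level sets are needed. Since $(W-\rho\1)\1=0$, for bounded $f$ and $c\ge\|f\|_\infty$ we have
\[
\langle f,(W-\rho\1)f\rangle=\langle f+c\1,(W-\rho\1)(f+c\1)\rangle\ge0,
\]
by copositivity of $W-\rho\1$. That copositivity is equivalent to local density \citep[Lemma~2.13]{BradacSudakovWigderson2024}, and the display is exactly the content of the Corollary~2.15 the paper cites. Drop the general-$d$ framing; once it is removed, your argument is complete and coincides with the paper's.
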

The proof is given in \cref{app:relations}.

We next state the theta-subdivision results. A \emph{rooted theta pattern}
$\Theta=P(r_1,\ldots,r_s)$ consists of $s$ internally disjoint root-to-root
paths of lengths $r_1,\ldots,r_s$. When this pattern is represented by a graph, it is a generalized theta graph in the usual sense. We allow
this slightly more flexible language because halving an even theta graph
with several length-two branches creates parallel root-edges; in the kernel
calculation this only means that the factor $K(x,y)$ appears with the
corresponding multiplicity. Given a graph $H$ and a rooted theta pattern
$\Theta$, the \emph{uniform theta-subdivision} $H[\Theta]$ is obtained by
replacing every edge of $H$ with a fresh copy of $\Theta$, identifying the
two roots with the endpoints of the original edge.

\begin{theorem}[Uniform theta-subdivisions]
\label{thm:uniform-theta}
Let $H$ be a graph with $e(H)>0$, and let
$\Theta=P(r_1,\ldots,r_s)$ be an arbitrary rooted theta pattern.
\begin{enumerate}
    \item If $H$ is Sidorenko-good, then $H[\Theta]$ is Sidorenko-good.
    Moreover, if either $H$ is Sidorenko-good-forcing or $s\ge2$, then
    equality
    \[
        t(H[\Theta],K)=\|K\|_1^{e(H[\Theta])}
    \]
    for a bounded doubly nonnegative kernel $K$ with $\|K\|_1>0$ holds if
    and only if $K=\|K\|_1\1$ a.e.

    \item If $H$ is KNRS, then $H[\Theta]$ is regular-KNRS. Moreover, if
    $H$ is KNRS-forcing, then equality
    \[
        t(H[\Theta],W)=\rho^{e(H[\Theta])}
    \]
    for a $\rho$-locally dense, $\rho$-regular graphon $W$, where
    $0<\rho<1$, holds if and only if $W=\rho\1$ a.e.
\end{enumerate}
\end{theorem}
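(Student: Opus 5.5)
The plan is to reduce both parts to a single kernel identity. For a kernel $K$ and the rooted pattern $\Theta=P(r_1,\ldots,r_s)$, integrating out the internal vertices of each branch gives
\[
        t(H[\Theta],K)=t(H,K_\Theta),
        \qquad
        K_\Theta(x,y)=\prod_{i=1}^s K^{r_i}(x,y),
\]
where $K^{r}$ denotes the $r$-fold operator power, i.e.\ the $r$-step walk kernel. Parallel root-edges, from branches with $r_i=1$, simply contribute extra factors $K(x,y)$, exactly as the paper's convention allows.

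For part 1, I first check that $K_\Theta$ is again doubly nonnegative whenever $K$ is.
\begin{itemize}
    \item Each $K^{r}$ is pointwise nonnegative, because $K\ge0$.
    \item Each $K^{r}$ is positive semidefinite, being a power of a PSD operator.
    \item By the Schur product theorem (for kernels), the pointwise product of PSD kernels is PSD.
\end{itemize}
Applying Sidorenko-goodness of $H$ to $K_\Theta$ then gives $t(H,K_\Theta)\ge \|K_\Theta\|_1^{e(H)}$.

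It remains to show $\|K_\Theta\|_1\ge \|K\|_1^{\sum r_i}$, since $e(H[\Theta])=e(H)\sum_i r_i$. This is where doubly nonnegative kernels help. First, for a PSD kernel $K$ we have
\[
        \langle K^{r}\1,\1\rangle\ge \langle K\1,\1\rangle^{r},
\]
by the spectral theorem and Jensen's inequality applied to the spectral measure of $\1$, using that all eigenvalues are nonnegative. Second, to handle the product over branches, I write $\int\prod_i K^{r_i}$ and use a spectral/Schur argument: for PSD kernels $A,B$,
\[
        \int AB \ge \frac{\Bigl(\int A\Bigr)\Bigl(\int B\Bigr)}{1}.
\]
One route to this is to write $A\circ B$ as a PSD kernel and bound $\langle (A\circ B)\1,\1\rangle$ using the rank-one PSD component $A\ge (\int A)^{-1}(A\1)(A\1)^{T}$ and Cauchy--Schwarz. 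I expect this multi-branch inequality $\int\prod_i K^{r_i}\ge\prod_i\int K^{r_i}$ to be the main obstacle. The first attempt will be the decomposition $A\succeq \frac{(A\1)(A\1)^{T}}{\langle A\1,\1\rangle}$, then the identity $\langle (uu^{T})\circ B\,\1,\1\rangle=\langle Bu,u\rangle$, and finally $\langle Bu,u\rangle\ge \langle B\1,\1\rangle$-type bounds obtained via PSD Cauchy--Schwarz with $u=A\1$. If that stalls, I will instead check whether the paper's earlier Hölder lemmas apply here.

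For equality in part 1, equality in Jensen forces the spectral measure of $\1$ to be concentrated at one eigenvalue. When $s\ge2$, equality in the product step additionally forces $K\1$ to be parallel to $\1$ together with a rank-one structure, giving $K=\|K\|_1\1$. In the forcing case, forcing for $H$ gives $K_\Theta$ constant, and one then deduces that $K$ is constant.

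For part 2, assume $W$ is $\rho$-regular and $\rho$-locally dense. Then $W^{r}$ is $\rho^{r}$-regular. I aim to show that $W_\Theta$ is $\rho^{\sum r_i}$-locally dense, so that KNRS for $H$ applies directly. Local density of $W$ means $W-\rho J$ is copositive on indicators. Together with regularity, this makes $W^{r}-\rho^{r}J$ copositive-type on nonnegative functions, since $W$ commutes with $J$ and $\langle W^{r}f,f\rangle$ can be expanded along the decomposition into the $\1$-direction and its orthogonal complement. The pointwise product is then handled through the same local-density bound on each factor restricted to $S\times S$. The equality case follows from KNRS-forcing for $H$, since $W_\Theta$ constant forces $W$ constant.
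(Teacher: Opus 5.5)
Your reduction $t(H[\Theta],K)=t(H,K_\Theta)$ and the double nonnegativity of $K_\Theta$ match the paper. Your spectral-Jensen bound $\langle K^{[r]}\1,\1\rangle\ge\langle K\1,\1\rangle^{r}$ correctly settles $s=1$; the paper cites Sidorenko's result for trees instead.

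\textbf{Part~1.} The gap is the multi-branch step $t(\Theta,K)\ge\|K\|_1^{e(\Theta)}$, which is the whole content of Part~1 when $s\ge2$.
\begin{itemize}
\item The two-factor inequality you propose, $\int AB\ge\int A\int B$ for PSD $A,B$, is false even for doubly nonnegative kernels: $A=2\cdot\1_{[0,1/2]^2}$ and $B=2\cdot\1_{[1/2,1]^2}$ give $0<1/4$.
\item Your rank-one/Cauchy--Schwarz route can be completed only because $K^{[r_1]}$ and $K^{[r_2]}$ commute. The last step is then Chebyshev's association inequality for the spectral measure of $\1$, namely $\langle K^{[r_1+r_2]}\1,\1\rangle\ge\langle K^{[r_1]}\1,\1\rangle\langle K^{[r_2]}\1,\1\rangle$. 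So it covers only $s=2$. For $s\ge3$ the second slot becomes a Hadamard product that does not commute with $K$, and you have no correlation bound for it.
\item More fundamentally, none of your tools uses $K\ge0$, yet for $s=3$ the inequality fails for PSD kernels without it. A rank-one $K=f\otimes f$ with $\int f^3=0\neq\int f$ gives $t(\Theta,K)=0<(\int K)^{e(\Theta)}$.
\end{itemize}
The missing idea is the one in \cref{lem:theta-forcing}. Expand
$t(\Theta,K)=\sum\lambda_{j_1}^{r_1}\cdots\lambda_{j_s}^{r_s}\bigl(\int\phi_{j_1}\cdots\phi_{j_s}\bigr)^2$,
a sum of nonnegative terms since all $\lambda_j\ge0$, and keep only the top term. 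Because $K\ge0$, the top eigenfunction $\phi_0$ may be taken nonnegative. Then $\int\phi_0^s\ge1$, so $t(\Theta,K)\ge\lambda_0^{e(\Theta)}\ge\langle K\1,\1\rangle^{e(\Theta)}$.

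The same expansion gives the equality case: equality forces $T_K\1=p\1$, and any further positive eigendirection placed on two branches adds a strictly positive term. Your appeal to a ``rank-one structure'' does not supply this.

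\textbf{Part~2.} Your decomposition along $\1$ and $\1^{\perp}$ proves more than you use. For $A=W-\rho\1$, copositivity together with $A\1=0$ gives $A\succeq0$, hence $W^{[r]}=\rho^{r}\1+A^{[r]}$ with $A^{[r]}\succeq0$. You then weaken this to local density of each factor and propose to handle the pointwise product factor by factor on $S\times S$. That cannot work:
\begin{itemize}
\item lower bounds on each $\int_{S\times S}W^{[r_i]}$ give no lower bound on $\int_{S\times S}\prod_iW^{[r_i]}$;
\item Hadamard products of copositive kernels need not be copositive.
\end{itemize}
You must keep positive semidefiniteness and apply Schur's theorem to the expansion
\[
\prod_{i=1}^s\bigl(\rho^{r_i}\1+A^{[r_i]}\bigr)-\rho^{m}\1=\sum_{\varnothing\neq S\subseteq[s]}\rho^{m-\sum_{i\in S}r_i}\bigodot_{i\in S}A^{[r_i]},\qquad m=e(\Theta).
\]
The right-hand side is PSD, hence copositive, so $W_\Theta$ is $\rho^{m}$-locally dense and KNRS for $H$ applies.

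This decomposition is also what justifies your unproved claim that $W_\Theta$ constant forces $W$ constant. If $W_\Theta=\rho^m\1$, every PSD summand vanishes, so $A^{[r_i]}=0$ and hence $A=0$.
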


The next theorem is a non-uniform form. It is the positive-semidefinite
analogue of the non-uniform even theta theorem of Im--Li--Liu
\citep[Theorem~1.4]{ImLiLiu2026}: the parity restriction is removed before
one passes to the final $1$-subdivision.

\begin{theorem}[Non-uniform Sidorenko-good theta-subdivisions]
\label{thm:nonuniform-sidgood-theta}
Let $H$ be a simple graph on vertex set $[h]$, where $h\ge2$, and let $F$ be
obtained from $H$ by replacing each edge $ij\in E(H)$ with a rooted theta
pattern. For $ij\in E(H)$, let $h_{ij}(r)$ be the number of root-to-root
paths of length $r$ used in the replacement of $ij$; for $ij\notin E(H)$,
put $h_{ij}(r)=0$. Define
\[
        C=\binom h2,
        \qquad
        N_r=\sum_{ij\in \binom{[h]}{2}} h_{ij}(r),
        \qquad
        \alpha_r=\frac{N_r}{C}.
\]
Suppose that $e(F)>0$ and that one of the following two conditions holds:
\begin{enumerate}
    \item $N_r$ is divisible by $\binom h2$ for every $r\ge1$;
    \item $N_r=0$ for all but one value of $r$, and for the exceptional
    value one has $N_r\ge \binom h2$.
\end{enumerate}
Then $F$ is Sidorenko-good.

Moreover, in the first case, let $\Theta_{\mathrm{av}}$ be the rooted theta
pattern with exactly $\alpha_r$ branches of length $r$ for each $r$. If
$\Theta_{\mathrm{av}}$ has at least two root-to-root branches, then equality
\[
        t(F,K)=\|K\|_1^{e(F)}
\]
for a bounded doubly nonnegative kernel $K$ with $\|K\|_1>0$ holds if and
only if $K=\|K\|_1\1$ a.e. In the second case, where the only exceptional
length is $r$, the same conclusion holds provided that the uniform graph
$K_h[P_r]$ is Sidorenko-good-forcing.
\end{theorem}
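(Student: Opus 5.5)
Let $K$ be a bounded doubly nonnegative kernel: symmetric, nonnegative, and positive semidefinite as an integral operator. The plan is to integrate out the internal vertices of every theta pattern. For an edge $ij$ of $H$, the theta replacing $ij$ contributes the factor
\[
\prod_r K^{(r)}(x_i,x_j)^{h_{ij}(r)}
\]
after this integration, where $K^{(r)}$ is the $r$-fold operator power of $K$. Hence
\[
t(F,K)=\int_{[0,1]^h}\prod_{ij\in E(H)} \Phi_{ij}(x_i,x_j)\dd\mathbf x,
\qquad
\Phi_{ij}=\prod_r (K^{(r)})^{\circ h_{ij}(r)} .
\]
Each $K^{(r)}$ is again doubly nonnegative. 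Hadamard products of PSD kernels are PSD by the Schur product theorem, and they are clearly nonnegative. So every $\Phi_{ij}$ is doubly nonnegative.

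The key step is a H\"older uniformization over the symmetric group $S_h$, in the spirit of the proof of \cref{thm:main}. Since the integral is invariant under relabelling the variables, $t(F,K)$ equals the integral of the analogous product for $\sigma(F)$, for every $\sigma\in S_h$. Take the geometric mean over all $\sigma$ of these integrands, and apply the reverse direction. Concretely, write $t(F,K)=\bigl(\prod_\sigma \int G_\sigma\bigr)^{1/h!}$ and use the generalized H\"older inequality in the form
\[
\prod_\sigma \|G_\sigma\|_1^{1/h!}\ge \int \prod_\sigma G_\sigma^{1/h!}.
\]
This holds trivially, since the left side dominates by H\"older with all exponents equal to $h!$. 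The averaged integrand is
\[
\prod_{ab\in\binom{[h]}{2}}\prod_r K^{(r)}(x_a,x_b)^{\beta_r},
\]
where $\beta_r$ is the average multiplicity over $\sigma$. Each pair $ab$ is hit by pairs $ij$ uniformly, so $\beta_r=N_r/C=\alpha_r$. Thus
\[
t(F,K)\ge t\bigl(K_h,\ \Psi\bigr),\qquad \Psi=\prod_r (K^{(r)})^{\circ \alpha_r}.
\]

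Now split according to the two hypotheses. In case 1 every $\alpha_r$ is an integer, so $\Psi$ is the edge kernel of $K_h[\Theta_{\mathrm{av}}]$. Thus $t(K_h,\Psi)=t(K_h[\Theta_{\mathrm{av}}],K)$. Complete graphs are Sidorenko-good; I will take this as known, e.g.\ via complete multipartite graphs (Sidorenko). Then \cref{thm:uniform-theta}(1) gives
\[
t(K_h[\Theta_{\mathrm{av}}],K)\ge\|K\|_1^{e(K_h[\Theta_{\mathrm{av}}])}.
\]
It remains to check that $e(K_h[\Theta_{\mathrm{av}}])=C\sum_r r\alpha_r=\sum_r rN_r=e(F)$.

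In case 2 only one length $r$ occurs, and $\alpha=\alpha_r\ge1$ may be fractional. Then $\Psi=(K^{(r)})^{\circ\alpha}$, and non-integer Hadamard powers need not be PSD. Instead I will use the pointwise bound $t(K_h,L^{\circ\alpha})\ge t(K_h,L)^{\alpha}$ for $L=K^{(r)}$, which is Jensen/H\"older on the full vertex measure since $\alpha\ge1$. Then $t(K_h,K^{(r)})=t(K_h[P_r],K)$, and $K_h[P_r]$ is Sidorenko-good by \cref{thm:uniform-theta}(1). This gives
\[
t(F,K)\ge \|K\|_1^{\alpha r C}=\|K\|_1^{e(F)}.
\]

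For equality, every inequality in the chain must be tight. In case 1, when $\Theta_{\mathrm{av}}$ has $s\ge2$ branches, the equality clause of \cref{thm:uniform-theta}(1) with $H=K_h$ already forces $K=\|K\|_1\1$ a.e. The H\"older step is not needed for this. In case 2, equality in the final step is exactly the assumed forcing property of $K_h[P_r]$.

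The main obstacle I expect is the bookkeeping in the H\"older averaging. I must verify that pairs $ab$ are covered uniformly, which uses $h\ge2$ and symmetry of $S_h$ acting on pairs. I must also make sure the exponents combine multiplicatively across different $r$ in a single geometric mean, rather than requiring a separate H\"older step for each length. The fractional case needs the extra Jensen step, which is why hypothesis 2 requires $N_r\ge C$.
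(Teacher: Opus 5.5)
Your proposal is correct and follows the paper's proof essentially step for step. You use the same $S_h$ H\"older uniformization to reach $t(K_h,\prod_r (K^{[r]})^{\circ\alpha_r})$. In case 1 you then combine Sidorenko-goodness of $K_h$ with the theta bound; you package this as \cref{thm:uniform-theta}(1) applied to $K_h[\Theta_{\mathrm{av}}]$, whereas the paper writes the chain out inline. In case 2 you use Jensen plus Sidorenko-goodness of $K_h[P_r]$, as the paper does. The only things left implicit are trivial: the constant kernel $\|K\|_1\1$ attains equality, and $e(F)>0$ forces some $\alpha_r\ge1$ in case 1.
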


Related forcing-preservation results for ordinary Sidorenko inequalities and
rooted substitutions were studied by Kiem, Parczyk and Spiegel
\citep{KiemParczykSpiegel2024}.
\begin{corollary}
\label{cor:parallel-thickening}
For any graph \(H\), there is a multigraph \(H'\)
such that \(H\cup H'\) is KNRS.
\end{corollary}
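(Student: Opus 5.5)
The plan is to apply \cref{thm:main} with an \emph{integer} exponent $q$. The point is that, for integer $q$, taking a Hadamard power of $W$ is the same as replacing every edge of the pattern graph by $q$ parallel copies. So the $L^q$-form for $H$ becomes the ordinary KNRS inequality for a parallel thickening of $H$, and $H'$ consists of the added parallel edges.

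\textbf{Trivial case.} If $e(H)=0$, take $H'$ empty. Then $t(H,W)=1=\rho^{0}$ for every graphon $W$, so $H$ is KNRS trivially.

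\textbf{Choice of $H'$.} Assume $m=e(H)>0$, and let $n$ be the number of non-isolated vertices of $H$. Set
\[
q=\Bigl\lceil \tbinom n2/m\Bigr\rceil\in\mathbb{Z}_{\ge1}.
\]
Let $H'$ be the multigraph on $V(H)$ in which each edge $ij\in E(H)$ appears with multiplicity $q-1$. Then $H\cup H'$ is the multigraph $H^{(q)}$ in which every edge of $H$ has multiplicity exactly $q$. In particular $e(H\cup H')=qm$.

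\textbf{Key identity.} Homomorphism densities of multigraphs are defined by taking the product over edges with multiplicity. Hence for every graphon $W$,
\[
t(H^{(q)},W)=\int_{[0,1]^{V(H)}}\prod_{ij\in E(H)}W(x_i,x_j)^q\dd\mathbf x=t(H,W^{\circ q}).
\]
Since $q\ge\binom n2/m$, \cref{thm:main} applies to $F=H$. For every $\rho\in[0,1]$ and every $\rho$-locally dense $W$ it gives
\[
t(H\cup H',W)=t(H,W^{\circ q})\ge\rho^{qm}=\rho^{e(H\cup H')}.
\]
This is exactly the statement that $H\cup H'$ is KNRS.

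\textbf{Main point to check.} I do not expect a real obstacle; the step needing care is definitional. The KNRS notion must be read for multigraphs in the natural way, with $t$ counting parallel edges multiplicatively and $e(\cdot)$ counting edges with multiplicity, consistent with the convention in \cref{thm:comparison-intro}. Isolated vertices of $H$ contribute a factor $1$ to $t$, so they cause no issue. If a simple supergraph were preferred, one could instead invoke \cref{thm:comparison-intro} with a smaller edge-transitive $J$ to lower $q$, but this is not needed.
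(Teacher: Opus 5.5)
Your computation is correct as far as it goes. For an integer $q\ge\binom n2/m$, the $q$-fold thickening $H^{(q)}$ satisfies $t(H^{(q)},W)=t(H,W^{\circ q})\ge\rho^{qm}$ by \cref{thm:main}.

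The problem is how you read $H\cup H'$. You put $H'$ on $V(H)$ and let its edges overlap those of $H$. Under that reading the corollary is essentially vacuous. Take $H'$ to be the complement of $H$ on $V(H)$; then $H\cup H'=K_{v(H)}$, which is already KNRS without \cref{thm:main}. The paper means a vertex-disjoint union: its $H'$ is a disjoint copy of $dH$, and the proof starts from
\[
t(H\cup H',W)=t(H,W)\,t(dH,W)=\Bigl(\int f\Bigr)\Bigl(\int f^{d}\Bigr),
\qquad f(\mathbf x)=\prod_{ij\in E(H)}W(x_i,x_j).
\]

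For the disjoint version, your construction does not go through as written. Placing $(q-1)H$ on fresh vertices gives $t(H\cup H',W)=\int f\cdot\int f^{q-1}$. Since $f$ and $f^{q-1}$ are comonotone, Chebyshev's integral inequality gives $\int f\cdot\int f^{q-1}\le\int f^{q}$. A lower bound on $\int f^q$ therefore tells you nothing about this product. The same comparison shows that, for the same number of added edges, the disjoint statement is the stronger one.

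The missing ingredient is Cauchy--Schwarz, $\int f\cdot\int f^{d}\ge\bigl(\int f^{(d+1)/2}\bigr)^2$. This turns the product of two densities back into a single $L^{(d+1)/2}$-type quantity. Applying \cref{thm:main} at the (possibly half-integer) exponent $(d+1)/2$ then gives $\rho^{(d+1)m}=\rho^{e(H\cup H')}$. This requires $(d+1)/2\ge\binom n2/m$, i.e.\ roughly twice as many parallel copies as your construction uses; that is the price of disjointness.
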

\begin{proof}
If \(e(H)=0\), then \(H\) is trivially KNRS, so we may take \(H'\) to be
empty. Assume \(m=e(H)>0\), and let \(n\) be the number of non-isolated
vertices of \(H\). Choose an integer \(d\ge1\) such that
\[
        q:=\frac{d+1}{2}\ge \frac{\binom n2}{m}.
\]
Let \(H'\) be a disjoint copy of the multigraph \(dH\), where \(dH\)
denotes the graph obtained from \(H\) by replacing every edge by \(d\)
parallel copies.

Let \(W\) be a \(\rho\)-locally dense graphon, and write
\[
        f(\mathbf x)=\prod_{ij\in E(H)}W(x_i,x_j).
\]
Then
\[
        t(H,W)=\int f,
        \qquad
        t(H',W)=t(dH,W)=\int f^d.
\]
Hence, by Cauchy's inequality,
\[
        t(H\cup H',W)
        =
        \left(\int f\right)\left(\int f^d\right)
        \ge
        \left(\int f^{(d+1)/2}\right)^2.
\]
Since \(q=(d+1)/2\), the last integral is
\[
        \int f^q
        =
        t(H,W^{\circ q}).
\]
By \cref{thm:main},
\[
        t(H,W^{\circ q})\ge \rho^{q e(H)}=\rho^{qm}.
\]
Therefore
\[
        t(H\cup H',W)
        \ge
        \rho^{2qm}
        =
        \rho^{(d+1)m}
        =
        \rho^{e(H)+e(H')}
        =
        \rho^{e(H\cup H')}.
\]
Thus \(H\cup H'\) is KNRS.
\end{proof}

The rest of the paper is organized as follows. In
\cref{sec:preliminaries} we recall the graphon and kernel notation and the
basic positive-semidefinite facts used in the proof. In \cref{sec:holder}
we prove the H\"older comparison lemma and derive \cref{thm:comparison-intro}
and \cref{thm:main}. In \cref{sec:good-theta} we prove
\cref{thm:uniform-theta} and \cref{thm:nonuniform-sidgood-theta}. In
\cref{sec:exponent} we formulate the resulting $L^p$-KNRS exponent and
record a few immediate consequences. Finally, \cref{app:relations} contains
the proof of \cref{prop:notion-relations}.

\section{Preliminaries}\label{sec:preliminaries}

For a graph or pattern $H$, write $V(H)$ and $E(H)$ for its
vertex and edge sets, and write $v(H)=|V(H)|$ and $e(H)=|E(H)|$. We write $\supp(H)$ for the
underlying simple support of $H$, obtained by keeping one copy of every
edge of positive multiplicity.

A \emph{graphon} is a symmetric measurable function
$W:[0,1]^2\to[0,1]$. More generally, a \emph{kernel} is a bounded symmetric
measurable function $K:[0,1]^2\to\bbR$. If $K$ is nonnegative and $H$ has
vertex set $[h]$, define
\[
        t(H,K)=
        \int_{[0,1]^h}
        \prod_{ij\in E(H)} K(x_i,x_j)
        \prod_{i=1}^h \dd x_i.
\]
For a graphon $W$ and a real number $q>0$, the $q$th Hadamard power of $W$
is
\[
        W^{\circ q}(x,y)=W(x,y)^q.
\]

\begin{definition}
Let $\rho\in[0,1]$. A graphon $W$ is called \emph{$\rho$-locally dense} if,
for every measurable set $S\subseteq[0,1]$,
\[
        \int_{S\times S} W(x,y)\dd x\dd y
        \ge
        \rho\lambda(S)^2,
\]
where $\lambda$ denotes Lebesgue measure.
\end{definition}

Following Brada\v{c}, Sudakov and Wigderson \cite{BradacSudakovWigderson2024}, a kernel $B$ is called
\emph{copositive} if
\[
        \int_{[0,1]^2} f(x)B(x,y)f(y)\dd x\dd y\ge0
\]
for every bounded nonnegative measurable function $f$. They prove that a
graphon $W$ is $\rho$-locally dense if and only if $W-\rho\1$ is
copositive \citep[Lemma~2.13]{BradacSudakovWigderson2024}. Moreover, if
$W$ is $\rho$-locally dense and $\rho$-regular, then $W-\rho\1$ is
positive semidefinite \citep[Corollary~2.15]{BradacSudakovWigderson2024}.

The graphon formulation of the KNRS conjecture is standard; see, for
instance, \citep{BradacSudakovWigderson2024,ChenLinMa2024,ImLiLiu2026}.

\begin{definition}
A graph $H$ is \emph{KNRS} if, for every $\rho\in[0,1]$ and every
$\rho$-locally dense graphon $W$,
\[
        t(H,W)\ge \rho^{e(H)}.
\]
It is \emph{KNRS-forcing} if equality in the last display, for
$\rho\in(0,1)$, implies $W=\rho\1$ a.e.
\end{definition}

We say that a graphon $W$ is \emph{$\rho$-regular} if
\[
        \int_0^1 W(x,y)\dd y=\rho
        \qquad\text{for a.e. }x\in[0,1].
\]

\begin{definition}
A graph $H$ is \emph{regular-KNRS} if, for every $\rho\in[0,1]$ and every
$\rho$-locally dense, $\rho$-regular graphon $W$,
\[
        t(H,W)\ge \rho^{e(H)}.
\]
It is \emph{regular-KNRS-forcing} if equality in the last display, for
$\rho\in(0,1)$, implies $W=\rho\1$ a.e.
\end{definition}

It is known that all complete multipartite graphs are KNRS; in particular,
all cliques and all complete bipartite graphs are KNRS
\citep{KNRS2010,BradacSudakovWigderson2024}.

We shall also use the following positive-semidefinite variant of the
Sidorenko inequality. For a bounded symmetric kernel $K$, write $K\succeq0$
if
\[
        \int_{[0,1]^2} f(x)K(x,y)f(y)\dd x\dd y\ge0
\]
for every bounded measurable real-valued function $f$. We call $K$
\emph{doubly nonnegative} if $K\ge0$ a.e. and $K\succeq0$. For a
nonnegative kernel $K$, write
\[
        \|K\|_1=\int_{[0,1]^2}K(x,y)\dd x\dd y.
\]

\begin{definition}
Sidorenko \citep{SidorenkoDNN2021} calls a graph satisfying the following
condition \emph{good}. We call such graphs \emph{Sidorenko-good}: for every
bounded doubly nonnegative kernel $K$,
\[
        t(H,K)\ge \|K\|_1^{e(H)}.
\]
A Sidorenko-good graph $H$ is \emph{Sidorenko-good-forcing} if equality for
 a doubly nonnegative kernel $K$ with $\|K\|_1\in(0,\infty)$ implies that
$K=\|K\|_1\1$ a.e.
\end{definition}

We first record the elementary effect of taking Hadamard powers on the
local density parameter.

\begin{lemma}\label{lem:power-locally-dense}
Let $W$ be a $\rho$-locally dense graphon and let $\alpha\ge1$. Then
$W^{\circ \alpha}$ is $\rho^\alpha$-locally dense. Moreover, if
$\alpha>1$, $0<\rho<1$, and
\[
        \int_{[0,1]^2} W(x,y)^\alpha\dd x\dd y=\rho^\alpha,
\]
then $W=\rho\1$ a.e.
\end{lemma}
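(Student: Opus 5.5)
The first claim follows from Jensen's inequality applied on $S\times S$ with its normalized measure. Fix a measurable $S\subseteq[0,1]$ with $\lambda(S)>0$; the case $\lambda(S)=0$ is trivial. Since $t\mapsto t^\alpha$ is convex on $[0,\infty)$ for $\alpha\ge1$, Jensen gives
\[
        \frac{1}{\lambda(S)^2}\int_{S\times S}W^\alpha
        \ge
        \left(\frac{1}{\lambda(S)^2}\int_{S\times S}W\right)^{\alpha}.
\]
Local density gives $\frac{1}{\lambda(S)^2}\int_{S\times S}W\ge\rho$, and $t\mapsto t^\alpha$ is increasing, so the right-hand side is at least $\rho^\alpha$. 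Multiplying through by $\lambda(S)^2$ yields $\int_{S\times S}W^{\circ\alpha}\ge\rho^\alpha\lambda(S)^2$. Hence $W^{\circ\alpha}$ is $\rho^\alpha$-locally dense. It is also a graphon, since it is symmetric, measurable and $[0,1]$-valued.

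For the equality statement, take $S=[0,1]$. Jensen and local density give the chain
\[
        \rho^\alpha=\int W^\alpha\ge\Big(\int W\Big)^\alpha\ge\rho^\alpha,
\]
so both inequalities are equalities. Since $\alpha>0$, the second equality gives $\int W=\rho$. When $\alpha>1$, the function $t^\alpha$ is strictly convex, so equality in Jensen forces $W$ to be a.e. equal to its mean. Therefore $W=\rho$ a.e.

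The only point needing care is the strict-convexity equality case. I would justify it directly rather than cite it. Set $c=\rho$ and use the tangent-line inequality
\[
        t^\alpha\ge c^\alpha+\alpha c^{\alpha-1}(t-c),
\]
which holds for $t\ge0$ and is strict when $t\ne c$ because $\alpha>1$ and $c>0$. Integrating and using $\int W=c$ gives $\int W^\alpha\ge c^\alpha$. Equality in this integrated bound forces the nonnegative gap between the two sides to vanish a.e., so $W=c$ a.e. The hypothesis $0<\rho<1$ guarantees $c>0$, which makes the tangent-line argument valid; with $c=0$ the derivative term degenerates. Beyond this, the argument is routine.
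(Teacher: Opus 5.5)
Your proposal is correct and follows essentially the same route as the paper: Jensen on $S\times S$ with the normalized measure combined with the local-density bound for the first claim, and the chain $\rho^\alpha=\int W^\alpha\ge(\int W)^\alpha\ge\rho^\alpha$ at $S=[0,1]$ together with strict convexity for the equality case. Your tangent-line argument simply makes explicit the Jensen equality case that the paper quotes.
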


\begin{proof}
Let $S\subseteq[0,1]$ be measurable. If $\lambda(S)=0$, there is nothing
to prove. Otherwise, Jensen's inequality applied to the convex function
$x\mapsto x^\alpha$ gives
\[
\frac{1}{\lambda(S)^2}
\int_{S\times S} W(x,y)^\alpha\dd x\dd y
\ge
\left(
\frac{1}{\lambda(S)^2}
\int_{S\times S} W(x,y)\dd x\dd y
\right)^\alpha
\ge
\rho^\alpha.
\]
Multiplying by $\lambda(S)^2$ proves the local-density claim.

For the equality statement, applying the same argument to $S=[0,1]$ gives
\[
        \int W^\alpha\ge \left(\int W\right)^\alpha\ge \rho^\alpha.
\]
If the left-hand side is $\rho^\alpha$, then equality holds in both
inequalities. Thus $\int W=\rho$, and equality holds in Jensen's inequality
for the strictly convex function $x^\alpha$. Hence $W$ is constant a.e.,
and the constant is $\rho$.
\end{proof}

For a bounded symmetric kernel $K$ and an integer $r\ge1$, write $K^{[r]}$
for the $r$th operator power of $K$, namely the kernel of the $r$-fold
composition of the integral operator with kernel $K$. Thus $K^{[1]}=K$,
and for $r\ge2$,
\[
        K^{[r]}(x,y)
        =
        \int_{[0,1]^{r-1}}
        K(x,z_1)K(z_1,z_2)\cdots K(z_{r-1},y)
        \prod_{i=1}^{r-1}\dd z_i .
\]
Positive semidefiniteness is preserved under positive integer operator
powers, and finite Hadamard products of positive semidefinite kernels are
positive semidefinite \citep[Lemmas~2.16 and~2.17]{BradacSudakovWigderson2024}.

\section{H\"older uniformization}\label{sec:holder}

We begin with the elementary averaging form of H\"older's inequality that
underlies the H\"older trick of Conlon and Lee \citep{ConlonLee2021}. In
our applications, H\"older's inequality is applied to edge-product functions
associated with labelled copies of a fixed graph. A similar uniformization
principle appears, for instance, in the work of Im, Li and Liu
\citep{ImLiLiu2026}.

\begin{fact}[H\"older's inequality]\label{fact:holder-averaging}
Let $(\Omega,\mu)$ be a probability space and let $\mathcal{A}$ be a finite
non-empty set. If $(f_\alpha)_{\alpha\in\mathcal{A}}$ is a family of
non-negative measurable functions on $\Omega$, then
\[
        \int_\Omega
        \prod_{\alpha\in\mathcal{A}} f_\alpha(x)^{1/|\mathcal{A}|}
        \,\dd\mu(x)
        \le
        \prod_{\alpha\in\mathcal{A}}
        \left(
        \int_\Omega f_\alpha(x)\,\dd\mu(x)
        \right)^{1/|\mathcal{A}|}.
\]
More generally, if $(\lambda_\alpha)_{\alpha\in\mathcal{A}}$ are
non-negative weights with $\sum_{\alpha\in\mathcal{A}}\lambda_\alpha=1$,
then
\[
        \int_\Omega
        \prod_{\alpha\in\mathcal{A}} f_\alpha(x)^{\lambda_\alpha}
        \,\dd\mu(x)
        \le
        \prod_{\alpha\in\mathcal{A}}
        \left(
        \int_\Omega f_\alpha(x)\,\dd\mu(x)
        \right)^{\lambda_\alpha}.
\]
\end{fact}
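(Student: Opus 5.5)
We prove the weighted form directly; the unweighted statement is the special case $\lambda_\alpha=1/|\mathcal{A}|$ for all $\alpha$. The approach is to normalise each function to have integral one, apply the weighted arithmetic–geometric mean inequality pointwise, and integrate. We use the convention $0^0=1$ throughout.

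First we reduce to the nondegenerate case. Write $I_\alpha=\int_\Omega f_\alpha\,\dd\mu\in[0,\infty]$. Indices with $\lambda_\alpha=0$ contribute the factor $1$ to both sides, so we discard them and assume every $\lambda_\alpha>0$.
\begin{itemize}
\item If some $I_\alpha=0$, then $f_\alpha=0$ $\mu$-a.e. Since $\lambda_\alpha>0$, the integrand on the left vanishes a.e., and both sides equal $0$.
\item If no $I_\alpha$ vanishes but some $I_\alpha=\infty$, the right-hand side is $+\infty$ and there is nothing to prove.
\end{itemize}
Hence we may assume $0<I_\alpha<\infty$ for every $\alpha$.

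In this case set $g_\alpha=f_\alpha/I_\alpha$, so that $\int g_\alpha\,\dd\mu=1$. Dividing both sides by $\prod_\alpha I_\alpha^{\lambda_\alpha}$ reduces the claim to
\[
\int_\Omega \prod_\alpha g_\alpha^{\lambda_\alpha}\,\dd\mu\le 1 .
\]

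The key step is the weighted AM--GM inequality $\prod_\alpha a_\alpha^{\lambda_\alpha}\le\sum_\alpha\lambda_\alpha a_\alpha$ for nonnegative reals $a_\alpha$. This follows from concavity of $\log$, applied via Jensen's inequality when all $a_\alpha>0$; if some $a_\alpha=0$ the left side is $0$ and the inequality is trivial. Applying it pointwise with $a_\alpha=g_\alpha(x)$ and integrating gives
\[
\int_\Omega\prod_\alpha g_\alpha^{\lambda_\alpha}\,\dd\mu\le\sum_\alpha\lambda_\alpha\int_\Omega g_\alpha\,\dd\mu=\sum_\alpha\lambda_\alpha=1 .
\]
Multiplying back by $\prod_\alpha I_\alpha^{\lambda_\alpha}$ gives the stated inequality.

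There is no real obstacle here. The only point needing care is the degenerate-case bookkeeping: zero or infinite integrals, zero weights, and the convention $0^0=1$. The normalisation step ensures that the pointwise AM--GM bound integrates exactly to $1$. An alternative route would be induction on $|\mathcal{A}|$ using the two-function Hölder inequality with exponents $1/\lambda$ and $1/(1-\lambda)$, but the AM--GM argument handles all $|\mathcal{A}|$ at once and avoids the induction.
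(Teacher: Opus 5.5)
Your proof is correct. Normalising each $f_\alpha$ to have integral one, applying the weighted AM--GM inequality pointwise and integrating is the standard proof of the generalised H\"older inequality, and your degenerate-case bookkeeping is sound. Discarding zero-weight indices tacitly uses $\infty^0=1$, which is harmless because every $f_\alpha$ in the paper's applications is bounded. The paper gives no proof of this statement and records it as a standard fact, so there is no other route to compare against.
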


We now apply this inequality to a transitive family of relabelled edge
products. The averaging has the effect of replacing the original graph by
an ambient graph in which every edge carries the same exponent.

\begin{lemma}[H\"older comparison]\label{lem:holder-comparison}
Let $F$ be a loopless graph and let $J$ be a simple graph on the same
vertex set $V$, with $e(F)>0$ and $\supp(F)\subseteq J$. Suppose that a subgroup $\Gamma\le \Aut(J)$ acts transitively on $E(J)$.
Then, for every graphon $U$,
\[
        t(F,U)\ge t\bigl(J,U^{\circ e(F)/e(J)}\bigr).
\]
\end{lemma}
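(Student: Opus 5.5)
The plan is to average the edge-product of $F$ over the relabellings in $\Gamma$ and apply the averaged H\"older inequality (\cref{fact:holder-averaging}). For $\gamma\in\Gamma$ (viewed as a permutation of $V$), put
\[
        f_\gamma(\mathbf x)=\prod_{ij\in E(F)} U(x_{\gamma(i)},x_{\gamma(j)}),
\]
where edges of $F$ are counted with multiplicity. Since the change of variables $\mathbf x\mapsto \mathbf x\circ\gamma$ preserves Lebesgue measure on $[0,1]^V$ and $U$ is symmetric, we get $\int f_\gamma = t(F,U)$ for every $\gamma$. Taking $\mathcal A=\Gamma$ with uniform weights $1/|\Gamma|$, \cref{fact:holder-averaging} gives
\[
        t(F,U)=\prod_{\gamma\in\Gamma}\Bigl(\int f_\gamma\Bigr)^{1/|\Gamma|}
        \ge \int \prod_{\gamma\in\Gamma} f_\gamma^{1/|\Gamma|}.
\]

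Next we compute the integrand on the right. For an unordered pair $e=\{u,v\}$ of distinct vertices, the total exponent of $U(x_u,x_v)$ in $\prod_\gamma f_\gamma$ is
\[
        N_e=\#\{(\gamma,ij): ij\in E(F),\ \gamma(\{i,j\})=e\}.
\]
Because every $\gamma$ is an automorphism of $J$ and $\supp(F)\subseteq J$, each edge of $F$ is mapped into $E(J)$. Hence $N_e=0$ unless $e\in E(J)$. For $e\in E(J)$, fix an edge $ij$ of $F$. Then $\{i,j\}\in E(J)$, and by transitivity of $\Gamma$ on $E(J)$ the number of $\gamma$ with $\gamma(\{i,j\})=e$ equals the size of the setwise stabilizer of $\{i,j\}$ in $\Gamma$, which is $|\Gamma|/e(J)$ by orbit--stabilizer. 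Summing over the $e(F)$ edges of $F$ (with multiplicity) gives $N_e=|\Gamma|\,e(F)/e(J)$ for every $e\in E(J)$. Therefore
\[
        \prod_{\gamma}f_\gamma^{1/|\Gamma|}=\prod_{uv\in E(J)}U(x_u,x_v)^{e(F)/e(J)},
\]
and its integral is exactly $t(J,U^{\circ e(F)/e(J)})$. This is the claimed inequality.

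The main obstacle is the exponent count: one must check that the orbit--stabilizer argument is done on unordered edges, with stabilizers taken setwise, so that automorphisms swapping the endpoints are counted correctly. One must also check that parallel edges of $F$ are handled simply by counting them with multiplicity. A minor point is that vertices isolated in $F$ cause no issue, since $F$ and $J$ share the vertex set $V$ and all integrals are over $[0,1]^V$. Finally, measurability and nonnegativity of the $f_\gamma$ are immediate, so \cref{fact:holder-averaging} applies directly.
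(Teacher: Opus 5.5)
Your proposal is correct and follows the paper's proof essentially step for step. You use the same relabelled edge-products $f_\gamma$, the same averaged H\"older inequality, and the same exponent count; the only cosmetic difference is that you get $N_e=|\Gamma|e(F)/e(J)$ edge by edge via orbit--stabilizer (the $\gamma$ sending $\{i,j\}$ to $e$ form a coset of the setwise stabilizer), whereas the paper shows all $N_{ab}$ are equal via the bijection $(\gamma,uv)\mapsto(\delta\gamma,uv)$ and then double counts.
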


\begin{proof}
Write a point of $[0,1]^V$ as
\[
        \mathbf{x}=(x_v)_{v\in V},
        \qquad
        \dd\mathbf{x}=\prod_{v\in V}\dd x_v.
\]
For each $\gamma\in\Gamma$, define the edge-product function
\[
        f_\gamma(\mathbf{x})
        =
        \prod_{uv\in E(F)}
        U(x_{\gamma(u)},x_{\gamma(v)}).
\]
Since $\gamma$ is a permutation of $V$, the change of variables
$y_u=x_{\gamma(u)}$ gives
\[
        \int_{[0,1]^V} f_\gamma(\mathbf{x})\,\dd\mathbf{x}
        =
        t(F,U)
        \qquad\text{for every }\gamma\in\Gamma.
\]
Consequently,
\[
        t(F,U)
        =
        \prod_{\gamma\in\Gamma}
        \left(
        \int_{[0,1]^V} f_\gamma(\mathbf{x})\,\dd\mathbf{x}
        \right)^{1/|\Gamma|}.
\]
Applying \cref{fact:holder-averaging} with $\Omega=[0,1]^V$,
$\mathcal{A}=\Gamma$, and the functions $f_\gamma$, we obtain
\begin{equation}\label{eq:holder-average-copies}
        t(F,U)
        \ge
        \int_{[0,1]^V}
        \prod_{\gamma\in\Gamma}
        f_\gamma(\mathbf{x})^{1/|\Gamma|}
        \,\dd\mathbf{x}.
\end{equation}

It remains to identify the product inside the integral. Since
$\supp(F)\subseteq J$ and every $\gamma\in\Gamma$ is an automorphism of $J$, every
edge $\gamma(uv)$ with $uv\in E(F)$ is an edge of $J$. Thus no factor
corresponding to a non-edge of $J$ can appear.

For an edge $ab\in E(J)$, let
\[
        N_{ab}
        =
        \left|
        \left\{
        (\gamma,uv)\in\Gamma\times E(F):
        \{\gamma(u),\gamma(v)\}=\{a,b\}
        \right\}
        \right|.
\]
We claim that $N_{ab}$ is independent of the choice of $ab\in E(J)$.
Indeed, if $ab,a'b'\in E(J)$, then edge-transitivity gives some
$\delta\in\Gamma$ such that
\[
        \{\delta(a),\delta(b)\}=\{a',b'\}.
\]
The map
\[
        (\gamma,uv)\longmapsto(\delta\gamma,uv)
\]
is a bijection from the set counted by $N_{ab}$ to the set counted by
$N_{a'b'}$. Hence all the numbers $N_{ab}$ are equal.

On the other hand, summing $N_{ab}$ over all edges of $J$ counts all pairs
$(\gamma,uv)\in\Gamma\times E(F)$. Therefore
\[
        \sum_{ab\in E(J)} N_{ab}
        =
        |\Gamma|e(F).
\]
Since the $N_{ab}$ are all equal, we have
\[
        N_{ab}
        =
        \frac{|\Gamma|e(F)}{e(J)}
        \qquad\text{for every }ab\in E(J).
\]
Thus, in the product
\[
        \prod_{\gamma\in\Gamma}
        f_\gamma(\mathbf{x})^{1/|\Gamma|},
\]
the factor $U(x_a,x_b)$ appears with exponent
\[
        \frac{N_{ab}}{|\Gamma|}
        =
        \frac{e(F)}{e(J)}
\]
for every $ab\in E(J)$, and no other edge factor appears. Hence
\[
        \prod_{\gamma\in\Gamma}
        f_\gamma(\mathbf{x})^{1/|\Gamma|}
        =
        \prod_{ab\in E(J)}
        U(x_a,x_b)^{e(F)/e(J)}.
\]
Substituting this identity into \eqref{eq:holder-average-copies}, we get
\[
\begin{aligned}
        t(F,U)
        &\ge
        \int_{[0,1]^V}
        \prod_{ab\in E(J)}
        U(x_a,x_b)^{e(F)/e(J)}
        \,\dd\mathbf{x}  \\
        &=
        t\bigl(J,U^{\circ e(F)/e(J)}\bigr),
\end{aligned}
\]
as required.
\end{proof}

We shall use the following elementary equality observation in the forcing
part of the comparison theorem.

\begin{lemma}\label{lem:edge-product-constant}
Let $G$ be a graph with no isolated vertices and at least one edge. Let
$W:[0,1]^2\to[0,\infty)$ be symmetric and measurable. If
\[
        \prod_{ij\in E(G)} W(x_i,x_j)=c
\]
for a positive constant $c$ and for a.e. $\mathbf{x}\in[0,1]^{V(G)}$, then
$W$ is constant a.e.
\end{lemma}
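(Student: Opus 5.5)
Fix an edge $ab\in E(G)$ and consider the edge-product function $\Phi(\mathbf x)=\prod_{ij\in E(G)}W(x_i,x_j)$, which equals $c>0$ a.e. Since $c>0$, every factor is positive a.e. The plan is to use Fubini to separate the variable $x_a$ from everything that does not interact with it. Write $\Phi(\mathbf x)=A(\mathbf x)\,B(\mathbf x)$, where $A$ collects the factors of edges incident to $a$ and $B$ collects the remaining edges, so $B$ does not depend on $x_a$. For a.e. choice of the other coordinates $\mathbf y=(x_v)_{v\ne a}$, the identity $A(x_a,\mathbf y)B(\mathbf y)=c$ holds for a.e. $x_a$. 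Hence $x_a\mapsto\prod_{v\in N(a)}W(x_a,x_v)$ equals $c/B(\mathbf y)$, a quantity independent of $x_a$. This is a strong rigidity statement.

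To extract constancy of $W$ itself, I would compare two values $x_a=s$ and $x_a=s'$. Equivalently, I would take logarithms: since $W>0$ a.e. on the relevant set, put $g=\log W$. Then for a.e. $(s,s',\mathbf y)$,
\[
\sum_{v\in N(a)}\bigl(g(s,x_v)-g(s',x_v)\bigr)=0 .
\]
Integration would be awkward because $g$ may be unbounded. So I would argue with a.e. identities instead: the function $h(s,\mathbf y)=\sum_{v\in N(a)}g(s,x_v)$ is a.e. equal to a function of $\mathbf y$ alone. Its dependence on the distinct coordinates $x_v$, $v\in N(a)$, is additive, and for fixed $s$ it is a sum of one-variable functions $g(s,\cdot)$ in separate variables. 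A standard lemma says that a sum $\sum_v \phi_v(x_v)$ of functions of distinct variables, which does not depend on $s$ a.e., forces each $\phi_v(s,\cdot)-\phi_v(s',\cdot)$ to be a.e. constant in $x_v$. So $g(s,t)=\alpha(s)+\beta(t)$ a.e. By symmetry of $W$, $g(s,t)=\alpha(s)+\alpha(t)+\text{const}$, and after normalization $g(s,t)=\psi(s)+\psi(t)$.

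I would then plug this form back into the full product. Each edge $ij$ contributes $\psi(x_i)+\psi(x_j)$, so $\log\Phi=\sum_{v}d_G(v)\psi(x_v)$, which must be a.e. constant. The variables are independent and every $d_G(v)\ge1$ because $G$ has no isolated vertices. A sum of functions of independent variables being a.e. constant forces each summand to be a.e. constant, so $\psi$ is constant a.e. Hence $g$, and therefore $W$, is constant a.e.

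The main obstacle is the measure-theoretic separation step: showing rigorously that an a.e. identity of the form $\sum_v\phi_v(x_v)=$ (function independent of some variables) yields a.e. additive decompositions, without integrability of $\log W$. To handle it, I would first truncate, replacing $g$ by a bounded function via restriction to sets where $|g|\le M$ of positive measure. Alternatively, I would apply the elementary fact that if $\phi(x)+\chi(y)=\kappa(x,y)$ with $\kappa$ a.e. independent of $x$, then fixing a good value $y_0$ by Fubini shows $\phi$ is a.e. constant. Iterating this one variable at a time avoids integrating unbounded functions.
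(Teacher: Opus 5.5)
Your argument is correct and is essentially the paper's. The shared core is: fix a vertex, compare two values of its coordinate, cancel the non-incident factors (legitimate since $W>0$ a.e.), and apply Fubini to the resulting identity in the neighbour variables. The one difference is that you discard information at the key step. Writing $D=g(s,\cdot)-g(s',\cdot)$, the identity $\sum_{v\in N(a)}D(x_v)=0$ forces not just $D\equiv\kappa$ a.e. but also $|N(a)|\,\kappa=0$. So the rows of $W$ coincide a.e. and symmetry finishes at once; this is exactly the paper's $R\equiv 1$. Your additive decomposition $g(s,t)=\alpha(s)+\beta(t)$, the symmetrization to $\psi(s)+\psi(t)$, and the degree-sum step $\sum_v d_G(v)\psi(x_v)=\log c$ are therefore unnecessary, though they are also valid.
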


\begin{proof}
The assumption $c>0$ first implies that $W>0$ a.e. Fix a vertex $v$ of
$G$, and write $N(v)=\{u_1,\ldots,u_d\}$, where $d\ge1$. By comparing the
edge-product identity with $x_v=x$ and with $x_v=x'$, and cancelling the
factors not incident with $v$, we get, for a.e. $x,x'$ and a.e.
$z_1,\ldots,z_d$,
\[
        \prod_{k=1}^d W(x,z_k)=\prod_{k=1}^d W(x',z_k).
\]
Thus, for a.e. $x,x'$, the positive function
$R(z)=W(x,z)/W(x',z)$ satisfies
\[
        R(z_1)\cdots R(z_d)=1
\]
for a.e. $(z_1,\ldots,z_d)$. Fubini's theorem implies that $R$ is a.e. a
constant, and then the displayed identity gives that this constant is $1$.
Hence $W(x,\cdot)=W(x',\cdot)$ for a.e. $x,x'$. All rows of $W$ are
therefore a.e. equal to a single function $g$. Since $W$ is symmetric,
$g(x)=g(y)$ for a.e. $(x,y)$, so $g$ is constant a.e.
\end{proof}

\begin{proof}[Proof of \cref{thm:comparison-intro}]
Let $U=W^{\circ q}$. By \cref{lem:holder-comparison},
\[
        t(F,W^{\circ q})
        =
        t(F,U)
        \ge
        t\bigl(J,U^{\circ e(F)/e(J)}\bigr)
        =
        t\bigl(J,W^{\circ \alpha}\bigr),
\]
where
\[
        \alpha=\frac{q e(F)}{e(J)}.
\]
Since $q\ge e(J)/e(F)$, we have $\alpha\ge1$. By
\cref{lem:power-locally-dense}, $W^{\circ\alpha}$ is
$\rho^\alpha$-locally dense. As $J$ is KNRS,
\[
        t\bigl(J,W^{\circ\alpha}\bigr)
        \ge
        (\rho^\alpha)^{e(J)}
        =
        \rho^{\alpha e(J)}
        =
        \rho^{q e(F)}.
\]
This proves the inequality.

Now suppose $0<\rho<1$ and equality holds. If $J$ is KNRS-forcing, then
all inequalities above are equalities, and the application of the KNRS
inequality to $J$ gives $W^{\circ\alpha}=\rho^\alpha\1$ a.e.; hence
$W=\rho\1$ a.e.

It remains to consider the case $\alpha>1$. Removing isolated vertices of
$J$ does not change the relevant densities, so assume that $J$ has no
isolated vertices. Put
\[
        G_J(\mathbf{x})=\prod_{ij\in E(J)}W(x_i,x_j).
\]
Then
\[
        t(J,W^{\circ\alpha})=\int G_J(\mathbf{x})^\alpha\dd\mathbf{x}
        \ge
        \left(\int G_J(\mathbf{x})\dd\mathbf{x}\right)^\alpha
        =
        t(J,W)^\alpha
        \ge
        \rho^{\alpha e(J)},
\]
where the first inequality is Jensen's inequality and the last inequality
uses that $J$ is KNRS. Equality in the final theorem forces equality in
this chain. Since $\alpha>1$, equality in Jensen's inequality implies that
$G_J$ is a.e. constant. The constant is positive, since $\rho>0$. By
\cref{lem:edge-product-constant}, $W$ is constant a.e.; local density then
forces this constant to be at least $\rho$, and equality in the displayed
chain forces it to be exactly $\rho$. Conversely, the constant graphon
$W=\rho\1$ gives equality.
\end{proof}

\begin{proof}[Proof of \cref{thm:main}]
Let $V_0$ be the set of non-isolated vertices of $F$, so $|V_0|=n$. Since
isolated vertices do not affect homomorphism densities, we may relabel
$V_0$ as $[n]$ and regard $\supp(F)$ as a spanning subgraph of $K_n$. Apply
\cref{thm:comparison-intro} with $J=K_n$ and $\Gamma=S_n$. Since $K_n$ is
KNRS and $S_n$ acts transitively on $E(K_n)$, we obtain the desired
inequality whenever
\[
        q\ge \frac{e(K_n)}{e(F)}
        =
        \frac{\binom n2}{e(F)}.
\]

\end{proof}

The proof also gives the following explicit bipartite variant.

\begin{corollary}\label{cor:bipartite}
Let $F$ be a bipartite graph with a fixed bipartition $A\cup B$, where
$|A|=a$, $|B|=b$, and $e(F)>0$. If $q\ge ab/e(F)$, then for every
$\rho$-locally dense graphon $W$,
\[
        t(F,W^{\circ q})\ge \rho^{q e(F)}.
\]
\end{corollary}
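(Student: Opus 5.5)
We apply \cref{thm:comparison-intro} with $J=K_{a,b}$, drawn on the vertex set $V(F)=A\cup B$ with all $ab$ edges between $A$ and $B$. First, we may assume that both $A$ and $B$ are non-empty. Indeed, since $e(F)>0$ and $F$ is bipartite with respect to $A\cup B$, every edge of $F$ joins $A$ to $B$, so $a,b\ge1$. For the same reason, every edge of $F$ is an $A$--$B$ pair, and hence $\supp(F)\subseteq J$. Isolated vertices of $F$ are allowed: they already lie in $A\cup B$, and they are simply vertices of $J$. Note also that $J$ need not have the same non-isolated set as $F$; this is harmless because \cref{thm:comparison-intro} only requires a common vertex set.

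Next we verify the two hypotheses on $J$. First, $K_{a,b}$ is KNRS, since complete bipartite graphs are complete multipartite and hence KNRS, as recalled in \cref{sec:preliminaries}. Second, we take $\Gamma=S_A\times S_B\le\Aut(K_{a,b})$, consisting of the permutations that fix $A$ and $B$ setwise. This group acts transitively on $E(K_{a,b})$: given edges $a_1b_1$ and $a_2b_2$ with $a_i\in A$ and $b_i\in B$, choose $\sigma\in S_A$ with $\sigma(a_1)=a_2$ and $\tau\in S_B$ with $\tau(b_1)=b_2$. This argument works even when $a\ne b$. The case $a=b$ would also allow swapping the two sides, but we do not need that.

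Finally, since $e(J)=ab$, the hypothesis $q\ge ab/e(F)$ is exactly $q\ge e(J)/e(F)$. Therefore \cref{thm:comparison-intro} gives $t(F,W^{\circ q})\ge\rho^{qe(F)}$ for every $\rho$-locally dense graphon $W$.

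There is no real obstacle here. The only point requiring care is to apply \cref{thm:comparison-intro} on the full vertex set $A\cup B$ rather than first deleting isolated vertices. The bipartition is fixed, and deleting vertices would change $a$ and $b$; in any case it would only sharpen the bound. Working on the full vertex set therefore gives the statement exactly as written.
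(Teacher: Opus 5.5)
Your proposal is correct and is essentially the paper's proof. The paper likewise regards $F$ as a subgraph of $J=K_{A,B}$ on the same vertex set, notes that $S_A\times S_B$ acts transitively on $E(K_{A,B})$ and that $K_{A,B}$ is KNRS, and applies \cref{thm:comparison-intro} with $e(J)=ab$; your additional remarks about isolated vertices and the case $a\ne b$ are sound and consistent with that argument.
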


\begin{proof}
Since $F$ is bipartite with bipartition $A\cup B$, we may regard $F$ as a
subgraph of $J=K_{A,B}$. The group $S_A\times S_B$ acts transitively on
$E(K_{A,B})$, and $K_{A,B}$ is KNRS. Apply
\cref{thm:comparison-intro} with $J=K_{A,B}$.
\end{proof}

\section{Theta subdivisions, Sidorenko-goodness, and regular-KNRS}
\label{sec:good-theta}

This section proves the positive-semidefinite theta-subdivision results
stated in the introduction. We first set up rooted theta kernels and then
prove the uniform and non-uniform theorems.

Let
\[
        \Theta=P(r_1,\ldots,r_s)
\]
be a rooted theta pattern with roots $a,b$. Given a kernel $K$, define the
rooted theta kernel
\[
        K_\Theta^*(x,y)
        =
        \prod_{i=1}^s K^{[r_i]}(x,y),
\]
where the product is pointwise. Equivalently, $K_\Theta^*(x,y)$ is the
homomorphism density of $\Theta$ in $K$ with the two roots fixed at $x$ and
$y$. In particular,
\begin{equation}\label{eq:theta-edge-density}
        \|K_\Theta^*\|_1=t(\Theta,K).
\end{equation}
For a graph $H$, recall that $H[\Theta]$ denotes the uniform
theta-subdivision obtained by replacing each edge of $H$ with a fresh copy
of $\Theta$, identifying the two roots with the endpoints of the original
edge. Integrating out the internal vertices of the inserted copies gives the
rooted-subdivision identity
\begin{equation}\label{eq:rooted-subdivision}
        t(H[\Theta],K)=t(H,K_\Theta^*).
\end{equation}

We shall also use the following equality-case facts for theta patterns.
The non-strict Sidorenko-good inequality for simple generalized theta
graphs follows from Sidorenko's theorem that theta graphs are extra-good
\citep[Theorem~6.5]{SidorenkoDNN2021}. The non-strict regular-KNRS
statement for generalized theta graphs was proved by
Brada\v{c}--Sudakov--Wigderson
\citep[Theorem~1.9]{BradacSudakovWigderson2024}. 

\begin{lemma}[Theta patterns are forcing]\label{lem:theta-forcing}
Let $\Theta=P(r_1,\ldots,r_s)$ be a rooted theta pattern with $s\ge2$, and
put $m=e(\Theta)=r_1+\cdots+r_s$. Then $\Theta$ is
Sidorenko-good-forcing, and hence regular-KNRS-forcing.
\end{lemma}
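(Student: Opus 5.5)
The plan is to treat the two-branch case by a direct spectral argument, reduce the general case to it through the equality analysis of Sidorenko's proof, and then deduce the regular-KNRS statement by viewing $W$ as a doubly nonnegative kernel.

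\emph{Two branches.} When $s=2$, the pattern $P(r_1,r_2)$ is (up to parallel root-edges when some $r_i=1$) the closed walk of length $m=r_1+r_2$. Hence
\[
t(\Theta,K)=\int K^{[r_1]}(x,y)K^{[r_2]}(x,y)\dd x\dd y=\operatorname{tr}(T_K^{m})=\sum_j\lambda_j^m .
\]
Here $T_K$ is the compact self-adjoint operator of $K$, and all $\lambda_j\ge0$ because $K\succeq0$. By the variational principle,
\[
\lambda_{\max}\ge\langle \1,T_K\1\rangle=\|K\|_1,
\]
so $t(\Theta,K)\ge\lambda_{\max}^m\ge\|K\|_1^m$. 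This gives the inequality with $s=2$.

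If equality holds and $\|K\|_1>0$, then three things follow:
\begin{itemize}
\item every eigenvalue other than the top one vanishes, since $m\ge2$ and all $\lambda_j\ge0$;
\item the top eigenvalue is simple;
\item $\langle\1,T_K\1\rangle=\lambda_{\max}$, which forces $\1$ to be a top eigenvector.
\end{itemize}
Therefore $T_K=\|K\|_1\,\1\otimes\1$, that is, $K=\|K\|_1\1$ a.e.

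\emph{Three or more branches.} For $s\ge3$, the non-strict inequality is Sidorenko's theorem that theta graphs are extra-good \citep[Theorem~6.5]{SidorenkoDNN2021}. For forcing, I would revisit that argument and isolate an inequality in which the two-branch case appears. Set
\[
A=K^{[r_1]}\circ K^{[r_2]},\qquad G=\prod_{i\ge3}K^{[r_i]}.
\]
By \citep[Lemmas~2.16 and~2.17]{BradacSudakovWigderson2024}, both $A$ and $G$ are doubly nonnegative. Then $t(\Theta,K)=\int A\,G$.

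Normalize so that $\|K\|_1=1$. I would aim for a Hölder-type bound
\[
\int A\,G\ \ge\ \Bigl(\int A\Bigr)^{\theta}\prod_{i\ge3}t(P(r_i,r_i'),K)^{\theta_i}
\]
or a similar bound in which every factor is a two-branch (cycle-type) density in $K$, with exponents adding up correctly. If equality holds in the theta inequality, then equality holds in each factor. By the two-branch case, this forces $K=\1$.

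\textbf{Main obstacle.} I expect this step to be the hard part: producing such an equality-tracking chain, or verifying that Sidorenko's own chain of inequalities passes through a cycle density that is not trivially tight. If a clean Hölder split does not work, the fallback is the Mercer expansion. Expanding each $K^{[r_i]}=\sum_j\lambda_j^{r_i}\varphi_j\otimes\varphi_j$ writes $t(\Theta,K)$ as a sum of nonnegative terms $\prod_i\lambda_{j_i}^{r_i}\bigl(\int\prod_i\varphi_{j_i}\bigr)^2$. I would then try to show that equality kills every term involving a non-top eigenvalue, using Sidorenko's bound on the remaining part.

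\emph{The regular-KNRS statement.} Let $W$ be $\rho$-locally dense and $\rho$-regular. By \citep[Corollary~2.15]{BradacSudakovWigderson2024}, $W-\rho\1\succeq0$. Since $\rho\1\succeq0$ as well, $W$ is doubly nonnegative, and regularity gives $\|W\|_1=\rho$. Sidorenko-goodness then yields $t(\Theta,W)\ge\rho^{m}$. If equality holds with $0<\rho<1$, Sidorenko-good-forcing gives $W=\rho\1$ a.e.
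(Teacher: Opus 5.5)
Your treatment of $s=2$ is correct: the identity $t(\Theta,K)=\operatorname{tr}(T_K^m)=\sum_j\lambda_j^m$ and its equality analysis are fine. So is the passage from Sidorenko-good-forcing to regular-KNRS-forcing.

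For $s\ge3$, however, the forcing statement, which is the real content of the lemma, is not proved. The H\"older split into cycle-type densities is never produced. It also cannot come from a general inequality between doubly nonnegative kernels, since $\int AG\ge\int A\int G$ fails: take $A=2\cdot\1_{[0,1/2]^2}$ and $G=2\cdot\1_{[1/2,1]^2}$. The Mercer fallback leaves the decisive step open. Sidorenko's bound $t(\Theta,K)\ge p^m$ on the whole sum says nothing about which individual terms must vanish at equality. In addition, Sidorenko's Theorem~6.5 concerns simple theta graphs, so it does not cover patterns with several length-one branches, which the lemma allows.

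The missing idea is to bound a single term of the expansion from below by $p^m$, where $p=\|K\|_1$. Since $K\ge0$, the top eigenvalue $\lambda_0$ of $T_K$ has a nonnegative unit eigenfunction $\phi_0$. To see this, replace any top eigenfunction $\phi$ by $|\phi|$, whose Rayleigh quotient is at least as large. All terms of the expansion are nonnegative, so keeping only the term $j_1=\cdots=j_s=0$ gives
\[
t(\Theta,K)\ge\lambda_0^m\Bigl(\int_0^1\phi_0^s\Bigr)^2\ge\lambda_0^m\ge\langle\1,T_K\1\rangle^m=p^m,
\]
because $\int\phi_0^s\ge(\int\phi_0^2)^{s/2}=1$ for $\phi_0\ge0$ and $s\ge2$.

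This gives the inequality for every $s\ge2$ directly, with parallel root-edges causing no trouble. At equality it forces $\lambda_0=\langle\1,T_K\1\rangle$, hence $T_K\1=p\1$, so you may take $\phi_0=\1$. Now suppose some eigenfunction $\phi_j\perp\1$ has $\lambda_j>0$. Place $\phi_j$ on two branches $a\ne b$ and $\1$ on all others. This term contributes $\lambda_j^{r_a+r_b}p^{m-r_a-r_b}>0$ on top of $p^m$, contradicting equality. Hence $K=p\1$ a.e.

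This is your two-branch argument embedded in the $s$-branch expansion, and it is exactly the paper's proof.
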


\begin{proof}
Let $K$ be a bounded doubly nonnegative kernel and set $p=\|K\|_1$. If
$p=0$, then $K=0$ a.e., since $K\ge0$, and there is nothing to prove.
Assume $p>0$.

Let $T_K$ be the compact self-adjoint integral operator with kernel $K$.
Since $T_K$ is positive semidefinite, its eigenvalues are nonnegative.
Since $K\ge0$, the spectral radius $\lambda_0$ has a nonnegative unit
eigenfunction $\phi_0$. Also
\[
        p=\langle \1,T_K\1\rangle\le \lambda_0,
\]
because $\|\1\|_2=1$.

Using an orthonormal spectral decomposition
$T_K\phi_j=\lambda_j\phi_j$, with $\lambda_j\ge0$, the rooted path kernel
of length $r$ is the kernel of $T_K^r$. Hence, by a standard finite-rank
approximation argument,
\[
\begin{aligned}
        t(\Theta,K)
        &=
        \int_{[0,1]^2}\prod_{i=1}^s K^{[r_i]}(x,y)\dd x\dd y  \\
        &=
        \sum_{j_1,\ldots,j_s}
        \lambda_{j_1}^{r_1}\cdots\lambda_{j_s}^{r_s}
        \left(
        \int_0^1\phi_{j_1}(x)\cdots\phi_{j_s}(x)\dd x
        \right)^2 .
\end{aligned}
\]
All terms in this expansion are nonnegative. Keeping only the term
$j_1=\cdots=j_s=0$ gives
\[
        t(\Theta,K)
        \ge
        \lambda_0^m
        \left(\int_0^1\phi_0(x)^s\dd x\right)^2 .
\]
Since $\phi_0\ge0$, $\|\phi_0\|_2=1$, $s\ge2$, and the underlying measure
is a probability measure,
\[
        \int_0^1\phi_0(x)^s\dd x
        \ge
        \left(\int_0^1\phi_0(x)^2\dd x\right)^{s/2}
        =
        1.
\]
Therefore
\[
        t(\Theta,K)\ge \lambda_0^m\ge p^m.
\]

Suppose now that equality holds. Then equality holds in
\[
        \langle \1,T_K\1\rangle\le\lambda_0.
\]
Since this is the Rayleigh quotient inequality for the top eigenvalue and
$\|\1\|_2=1$, we have
\[
        T_K\1=p\1.
\]
Choose the spectral basis so that $\phi_0=\1$ and $\lambda_0=p$.

We claim that there is no eigenfunction $\phi_j\perp\1$ with positive
eigenvalue $\lambda_j>0$. Indeed, if such a $\phi_j$ existed, choose two
distinct branches $a,b\in[s]$. In the spectral expansion above, take
$\phi_j$ on the branches $a$ and $b$, and take $\1$ on every remaining
branch. The corresponding term is
\[
        \lambda_j^{r_a+r_b}p^{m-r_a-r_b}
        \left(\int_0^1 \phi_j(x)^2\dd x\right)^2
        =
        \lambda_j^{r_a+r_b}p^{m-r_a-r_b}
        >
        0.
\]
This term is in addition to the constant term \(p^m\), while all terms in
the expansion are nonnegative. Hence \(t(\Theta,K)>p^m\), contradicting
equality.

Thus the only positive spectral direction is the constant one. Hence, as
an \(L^2\)-kernel,
\[
        K=p\,\1
\]
and therefore \(K=p\1\) a.e. Conversely, the constant kernel clearly gives
equality. This proves that \(\Theta\) is Sidorenko-good-forcing.

\end{proof}

\begin{remark}\label{rem:path-not-forcing}
The assumption $s\ge2$ is necessary for Sidorenko-good-forcing. A single
path is not forcing: if $K(x,y)=p+\varepsilon f(x)f(y)$, where $f$ is a bounded function with $\int f=0$ and $\|f\|_2=1$,
and $\varepsilon>0$ is small enough that $K\ge0$, then $K$ is doubly nonnegative and nonconstant, but
$t(P_r,K)=p^r$ for every path $P_r$.
\end{remark}

\subsection{Uniform theta-subdivisions}

\begin{proof}[Proof of \cref{thm:uniform-theta}]
First let $K$ be a bounded doubly nonnegative kernel. Since $K\ge0$, each
operator power $K^{[r_i]}$ is nonnegative. Since $K$ is positive
semidefinite, each $K^{[r_i]}$ is positive semidefinite; by the
Hadamard-product closure of the PSD cone, the pointwise product
\[
        K_\Theta^*
        =
        \prod_{i=1}^s K^{[r_i]}
\]
is also positive semidefinite. Hence $K_\Theta^*$ is doubly nonnegative.

If $H$ is Sidorenko-good, then by \eqref{eq:rooted-subdivision},
\[
        t(H[\Theta],K)
        =
        t(H,K_\Theta^*)
        \ge
        \|K_\Theta^*\|_1^{e(H)}.
\]
By \eqref{eq:theta-edge-density},
\[
        \|K_\Theta^*\|_1=t(\Theta,K).
\]
If $s=1$, then $\Theta$ is a path, and the required inequality follows
from Sidorenko's extra-goodness result for trees
\citep[Corollary~6.4]{SidorenkoDNN2021}. If $s\ge2$, then it follows from
\cref{lem:theta-forcing}. Thus, in all cases,
\[
        t(\Theta,K)\ge \|K\|_1^{e(\Theta)}.
\]
Combining these inequalities gives
\[
        t(H[\Theta],K)
        \ge
        \|K\|_1^{e(\Theta)e(H)}
        =
        \|K\|_1^{e(H[\Theta])}.
\]
Thus $H[\Theta]$ is Sidorenko-good.

We now prove the Sidorenko-good forcing clause. Let $p=\|K\|_1>0$ and
suppose equality holds. If $s\ge2$ and $K$ is not constant, then
\cref{lem:theta-forcing} gives
\[
        \|K_\Theta^*\|_1=t(\Theta,K)>p^{e(\Theta)},
\]
which makes the final inequality strict. Thus equality forces $K=p\1$ in
this case. If instead $H$ is Sidorenko-good-forcing, then equality in the
chain above forces equality in the application of Sidorenko-goodness to
$H$, so
\[
        K_\Theta^*=\|K_\Theta^*\|_1\1.
\]
It also forces $t(\Theta,K)=p^{e(\Theta)}$. If $s\ge2$, the preceding
argument again gives $K=p\1$. If $s=1$, say $\Theta=P_r$, then
$K_\Theta^*=K^{[r]}=p^r\1$. By the spectral decomposition of the PSD
operator $T_K$, this implies that all nonconstant eigenvalues vanish and
hence $K=p\1$ a.e. The converse is immediate for constant kernels.

We next prove the regular-KNRS statement. Let $m=e(\Theta)=r_1+\cdots+r_s$,
and let $W$ be a $\rho$-locally dense, $\rho$-regular graphon. Put
\[
        A=W-\rho\1.
\]
By $\rho$-local density and $\rho$-regularity, $A$ is positive
semidefinite \citep[Corollary~2.15]{BradacSudakovWigderson2024}. Moreover
$A$ is $0$-regular, that is, $A\1=0$. Hence, for every $r\ge1$,
\begin{equation}\label{eq:path-power-decomp}
        W^{[r]}
        =
        (\rho\1+A)^{[r]}
        =
        \rho^r\1+A^{[r]}.
\end{equation}
Indeed, all mixed operator products vanish because $A\1=0$, and by
symmetry also $\1 A=0$.

Using \eqref{eq:path-power-decomp}, the rooted theta kernel of $\Theta$ in
$W$ is
\[
        W_\Theta^*
        =
        \prod_{i=1}^s W^{[r_i]}
        =
        \prod_{i=1}^s \bigl(\rho^{r_i}\1+A^{[r_i]}\bigr),
\]
where the product is pointwise. Expanding this Hadamard product gives
\[
        W_\Theta^*-\rho^m\1
        =
        \sum_{\varnothing\ne S\subseteq [s]}
        \rho^{m-\sum_{i\in S}r_i}
        \bigodot_{i\in S} A^{[r_i]}.
\]
Each $A^{[r_i]}$ is positive semidefinite, and every Hadamard product
appearing above is positive semidefinite. Therefore
$W_\Theta^*-\rho^m\1$ is positive semidefinite, hence copositive. Thus
$W_\Theta^*$ is $\rho^m$-locally dense.

By \eqref{eq:rooted-subdivision},
\[
        t(H[\Theta],W)=t(H,W_\Theta^*).
\]
Since $H$ is KNRS,
\[
        t(H[\Theta],W)
        =
        t(H,W_\Theta^*)
        \ge
        (\rho^m)^{e(H)}
        =
        \rho^{e(H[\Theta])}.
\]
Thus $H[\Theta]$ is regular-KNRS.

Finally assume that $H$ is KNRS-forcing and equality holds for a
$\rho$-locally dense, $\rho$-regular graphon $W$ with $0<\rho<1$. Then
equality must hold in the KNRS inequality for $H$ applied to the
$\rho^m$-locally dense graphon $W_\Theta^*$, and hence
$W_\Theta^*=\rho^m\1$ a.e. Since $W_\Theta^*-\rho^m\1$ is a sum of PSD
kernels with nonnegative coefficients, each summand must vanish. In
particular $A^{[r_i]}=0$ for every $i$, and so the PSD operator with kernel
$A$ has no positive eigenvalue. Hence $A=0$, or equivalently
$W=\rho\1$ a.e. The converse is immediate.
\end{proof}

\subsection{Non-uniform theta-subdivisions}

The proof of the non-uniform theorem follows the H\"older uniformization of
Im--Li--Liu, with the evenness condition replaced by positive
semidefiniteness of operator powers and their Hadamard products.

\begin{lemma}[H\"older uniformization for non-uniform theta-subdivisions]
\label{lem:nonuniform-theta-holder}
Let $H$ be a graph on vertex set $[h]$, where $h\ge2$, and let $F$ be
obtained from $H$ by replacing each edge $ij\in E(H)$ with a rooted theta
pattern. Define $h_{ij}(r)$, $N_r$, and $\alpha_r=N_r/\binom h2$ as in
\cref{thm:nonuniform-sidgood-theta}. Then, for every bounded nonnegative
symmetric kernel $K$,
\[
        t(F,K)
        \ge
        \int_{[0,1]^h}
        \prod_{ij\in \binom{[h]}{2}}
        \prod_{r\ge1}
        K^{[r]}(x_i,x_j)^{\alpha_r}
        \prod_{i=1}^h \dd x_i .
\]
Only finitely many of the functions $h_{ij}(r)$, and hence only finitely
many $\alpha_r$, are nonzero.
\end{lemma}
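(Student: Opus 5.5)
The plan is to integrate out the internal vertices of every inserted theta pattern and then apply the averaging form of H\"older's inequality (\cref{fact:holder-averaging}) over all relabellings $\sigma\in S_h$ of the root vertex set. This mirrors the proof of \cref{lem:holder-comparison} with $J=K_h$ and $\Gamma=S_h$. The only difference is that each edge factor is now a product of path kernels rather than a single copy of $K$.

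First, integrate out the internal vertices of the inserted copies, exactly as in \eqref{eq:rooted-subdivision}. Since each inserted pattern has its internal vertices private to that pattern, this gives
\[
        t(F,K)=\int_{[0,1]^h} g(\mathbf x)\prod_{i=1}^h\dd x_i,
        \qquad
        g(\mathbf x)=\prod_{ij\in\binom{[h]}{2}}\prod_{r\ge1}K^{[r]}(x_i,x_j)^{h_{ij}(r)}.
\]
Here $g$ is nonnegative because $K\ge0$, and so is every $K^{[r]}$. For each $\sigma\in S_h$, set $g_\sigma(\mathbf x)=g(x_{\sigma(1)},\ldots,x_{\sigma(h)})$. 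Changing variables shows that $\int g_\sigma=t(F,K)$ for every $\sigma$. Applying \cref{fact:holder-averaging} with $\mathcal A=S_h$ then gives
\[
        t(F,K)\ge\int\prod_{\sigma\in S_h}g_\sigma^{1/h!}.
\]

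Next, identify the exponents in this product. Fix a pair $ab\in\binom{[h]}{2}$ and a length $r$. The factor $K^{[r]}(x_a,x_b)$ arises from pairs $(\sigma,ij)$ with $\{\sigma(i),\sigma(j)\}=\{a,b\}$, each contributing multiplicity $h_{ij}(r)$; the symmetry of $K^{[r]}$ lets us ignore orientation. For each unordered pair $ij$, the number of $\sigma\in S_h$ sending $\{i,j\}$ to $\{a,b\}$ is $2(h-2)!=h!/\binom h2$. Hence the total exponent of $K^{[r]}(x_a,x_b)$ is
\[
        \frac{1}{h!}\sum_{ij}h_{ij}(r)\frac{h!}{\binom h2}=\frac{N_r}{\binom h2}=\alpha_r,
\]
which is exactly the integrand in the statement. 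The finiteness claim is immediate, since $H$ has finitely many edges and each pattern has finitely many branches.

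The only point needing care is the bookkeeping: multiplicities must be counted with the $h_{ij}(r)$, and the $0^0$ convention must be handled for pairs with exponent zero. I do not expect any step to be a genuine obstacle, because the argument is the counting from \cref{lem:holder-comparison} applied to the transitive action of $S_h$ on $\binom{[h]}{2}$.
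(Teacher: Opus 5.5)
Your proposal is correct and follows essentially the same route as the paper: integrate out the internal path vertices, apply the averaging H\"older inequality over $S_h$, and use that each unordered pair is hit by exactly $2(h-2)!$ permutations to obtain the exponent $\alpha_r$. The only cosmetic difference is that you permute the variables $x_{\sigma(i)}$, while the paper permutes the multiplicities $h_{\sigma(i)\sigma(j)}(r)$; this amounts to replacing $\sigma$ by $\sigma^{-1}$.
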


\begin{proof}
After integrating out the internal vertices in all inserted paths, we have
\[
        t(F,K)
        =
        \int_{[0,1]^h}
        \prod_{ij\in \binom{[h]}{2}}
        \prod_{r\ge1}
        K^{[r]}(x_i,x_j)^{h_{ij}(r)}
        \prod_{i=1}^h \dd x_i .
\]
For each permutation $\sigma\in S_h$, set
\[
        f_\sigma(\mathbf x)
        =
        \prod_{ij\in \binom{[h]}{2}}
        \prod_{r\ge1}
        K^{[r]}(x_i,x_j)^{h_{\sigma(i)\sigma(j)}(r)} .
\]
By relabelling the variables, $\int f_\sigma=t(F,K)$ for every
$\sigma\in S_h$. Therefore, by H\"older's inequality,
\[
        t(F,K)
        =
        \prod_{\sigma\in S_h}
        \left(\int f_\sigma\right)^{1/h!}
        \ge
        \int
        \prod_{\sigma\in S_h} f_\sigma(\mathbf x)^{1/h!}
        \dd\mathbf x .
\]
Fix an unordered pair $ij\in \binom{[h]}{2}$ and an integer $r\ge1$.
For each unordered pair $ab\in \binom{[h]}{2}$, the number of permutations
$\sigma\in S_h$ with
\[
        \{\sigma(i),\sigma(j)\}=\{a,b\}
\]
is $2(h-2)!$. Hence the exponent of $K^{[r]}(x_i,x_j)$ in the geometric
mean $\prod_{\sigma} f_\sigma^{1/h!}$ is
\[
        \frac{2(h-2)!}{h!}
        \sum_{ab\in \binom{[h]}{2}}h_{ab}(r)
        =
        \frac{N_r}{\binom h2}
        =
        \alpha_r .
\]
Substituting this exponent calculation into the previous inequality gives
 the desired bound.
\end{proof}

\begin{proof}[Proof of \cref{thm:nonuniform-sidgood-theta}]
Let $K$ be a bounded doubly nonnegative kernel and put $p=\|K\|_1$. By
\cref{lem:nonuniform-theta-holder},
\[
        t(F,K)
        \ge
        \int_{[0,1]^h}
        \prod_{ij\in \binom{[h]}{2}}
        \prod_{r\ge1}
        K^{[r]}(x_i,x_j)^{\alpha_r}
        \prod_{i=1}^h \dd x_i .
\]

First assume that $N_r$ is divisible by $\binom h2$ for every $r$. Then
each $\alpha_r$ is a nonnegative integer. Define
\[
        L(x,y)
        =
        \prod_{r\ge1} K^{[r]}(x,y)^{\alpha_r},
\]
where the product is pointwise. Since $K$ is doubly nonnegative, each
operator power $K^{[r]}$ is doubly nonnegative. Positive semidefiniteness is
preserved under operator powers and under Hadamard products, so $L$ is also
doubly nonnegative. Hence
\[
        t(F,K)\ge t(K_h,L).
\]
Sidorenko proved that complete graphs are extra-good, and hence
Sidorenko-good \citep[Corollary~6.2]{SidorenkoDNN2021}. Therefore
\[
        t(K_h,L)\ge \|L\|_1^{\binom h2}.
\]
Now $\|L\|_1$ is the homomorphism density in $K$ of the rooted theta pattern
$\Theta_{\mathrm{av}}$ having $\alpha_r$ paths of length $r$ for each $r$.
If $\Theta_{\mathrm{av}}$ has a single branch, then it is a path and the
required inequality follows from Sidorenko's extra-goodness result for
trees \citep[Corollary~6.4]{SidorenkoDNN2021}. If it has at least two
branches, it follows from \cref{lem:theta-forcing}. Hence
\[
        \|L\|_1
        =
        t(\Theta_{\mathrm{av}},K)
        \ge
        p^{\sum_{r\ge1} r\alpha_r}.
\]
Consequently,
\[
        t(F,K)
        \ge
        p^{\binom h2\sum_{r\ge1}r\alpha_r}.
\]
But
\[
        \binom h2\sum_{r\ge1}r\alpha_r
        =
        \sum_{r\ge1} rN_r
        =
        e(F),
\]
so $t(F,K)\ge p^{e(F)}$. If $\Theta_{\mathrm{av}}$ has at least two
root-to-root branches and $K$ is not constant, then
\cref{lem:theta-forcing} gives
$t(\Theta_{\mathrm{av}},K)>p^{e(\Theta_{\mathrm{av}})}$, and the displayed
chain is strict. Conversely, a constant kernel gives equality.

Now assume that $N_r=0$ for all but one value of $r$, and write
\[
        \alpha=\alpha_r=\frac{N_r}{\binom h2}\ge1
\]
for this exceptional length. The H\"older uniformization gives
\[
        t(F,K)
        \ge
        \int_{[0,1]^h}
        \left(
        \prod_{ij\in\binom{[h]}{2}}
        K^{[r]}(x_i,x_j)
        \right)^\alpha
        \prod_{i=1}^h\dd x_i .
\]
Since $\alpha\ge1$, Jensen's inequality gives
\[
        t(F,K)
        \ge
        \left(
        \int_{[0,1]^h}
        \prod_{ij\in\binom{[h]}{2}}
        K^{[r]}(x_i,x_j)
        \prod_{i=1}^h\dd x_i
        \right)^\alpha .
\]
The integral inside the parentheses is $t(K_h[P_r],K)$, where $K_h[P_r]$
is the graph obtained by replacing every edge of $K_h$ with a path of
length $r$. By the first part of the proof, applied with one length-$r$
path on every edge of $K_h$, this graph is Sidorenko-good. Hence
\[
        t(K_h[P_r],K)
        \ge
        p^{r\binom h2}.
\]
It follows that
\[
        t(F,K)
        \ge
        p^{\alpha r\binom h2}
        =
        p^{rN_r}
        =
        p^{e(F)}.
\]
If $K_h[P_r]$ is Sidorenko-good-forcing and $K$ is not constant, then the
last application of Sidorenko-goodness is strict, so equality in the final
bound is impossible. Again the constant kernel gives equality.
\end{proof}

\begin{remark}[Even theta-subdivisions]
If the inserted patterns are obtained by halving even theta-subdivisions,
then the preceding results imply the corresponding Sidorenko conclusions
after taking one subdivision. For instance, if
$\Theta_{\mathrm{even}}=\Sub(\Theta)$, then \cref{thm:uniform-theta} gives
that $H[\Theta]$ is regular-KNRS whenever $H$ is KNRS; by
\cref{prop:notion-relations},
\[
        H[\Theta_{\mathrm{even}}]
        =
        H[\Sub(\Theta)]
        =
        \Sub(H[\Theta])
\]
is Sidorenko. This is the uniform even-theta conclusion of
\citep[Theorem~1.3]{ImLiLiu2026}. The same halving argument, applied to
\cref{thm:nonuniform-sidgood-theta}, gives the corresponding non-uniform
even theorem \citep[Theorem~1.4]{ImLiLiu2026}.
\end{remark}

\section{The \texorpdfstring{$L^p$}{Lp}-KNRS exponent}\label{sec:exponent}

It is natural to package \cref{thm:main} and \cref{thm:comparison-intro} as
a numerical relaxation of the KNRS property.

\begin{definition}
Let $F$ be a simple graph with $e(F)>0$. A real number $q\ge1$ is called
\emph{admissible} for $F$ if, for every $\rho\in[0,1]$ and every
$\rho$-locally dense graphon $W$,
\[
        t(F,W^{\circ q})\ge \rho^{q e(F)}.
\]
Equivalently, writing
\[
        W_F(\mathbf x)=\prod_{ij\in E(F)}W(x_i,x_j),
\]
this says that
\[
        \|W_F\|_{L^q([0,1]^{V(F)})}\ge \rho^{e(F)}.
\]
Define
\[
        \kappa_{\mathrm{KNRS}}(F)
        =
        \inf\{q\ge1:q\text{ is admissible for }F\}.
\]
\end{definition}

With this notation, \cref{thm:main} says that every graph has finite
$L^p$-KNRS exponent and gives the universal estimate
\[
        \kappa_{\mathrm{KNRS}}(F)
        \le
        \frac{\binom n2}{e(F)},
\]
where $n$ is the number of non-isolated vertices of $F$. The comparison
principle gives the more general bound
\[
        \kappa_{\mathrm{KNRS}}(F)
        \le
        \frac{e(J)}{e(F)}
\]
whenever $\supp(F)$ embeds as a spanning subgraph of an edge-transitive KNRS
graph $J$.

The admissible exponents form a ray.

\begin{proposition}\label{prop:ray}
Let $F$ be a graph with $e(F)>0$. If $q$ is admissible for $F$ and
$q'\ge q$, then $q'$ is admissible for $F$.
\end{proposition}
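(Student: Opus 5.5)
The plan is to reduce admissibility of $q'$ to admissibility of $q$ by a monotonicity property of $L^p$ norms on a probability space. Fix a $\rho$-locally dense graphon $W$ and write
\[
        W_F(\mathbf x)=\prod_{ij\in E(F)}W(x_i,x_j)
\]
on $[0,1]^{V(F)}$, which carries Lebesgue measure and is therefore a probability space. Admissibility of $q$ says $\|W_F\|_{L^q}\ge\rho^{e(F)}$, and we want the same inequality with $q'$ in place of $q$.

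The single step is Jensen's (or Lyapunov's) inequality. Since $q'/q\ge1$, the function $s\mapsto s^{q'/q}$ is convex on $[0,\infty)$. Applying Jensen to the nonnegative function $W_F^{\,q}$ gives
\[
        \int W_F^{\,q'}=\int \bigl(W_F^{\,q}\bigr)^{q'/q}\ge\Bigl(\int W_F^{\,q}\Bigr)^{q'/q}.
\]
In norm form this reads $\|W_F\|_{L^{q'}}\ge\|W_F\|_{L^q}\ge\rho^{e(F)}$. Raising to the power $q'$ yields
\[
        t(F,W^{\circ q'})\ge\rho^{q'e(F)}.
\]
This holds for every $\rho\in[0,1]$ and every $\rho$-locally dense $W$, so $q'$ is admissible.

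There is no real obstacle here. The only points to check are that $W_F$ is bounded and measurable, so all integrals are finite, and that the underlying measure has total mass one, which is what makes $L^p$ norms monotone in $p$. The degenerate case $\rho=0$ needs no separate treatment, since the right-hand side is then $0$ and the inequality is trivial. As a consequence, the set of admissible exponents is exactly $[\kappa_{\mathrm{KNRS}}(F),\infty)$, or the same ray with its endpoint removed if the infimum is not attained.
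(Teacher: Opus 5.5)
Your proof is correct, but it takes a different route from the paper's. The paper works at the level of the host graphon. With $\beta=q'/q$, it uses \cref{lem:power-locally-dense} to see that $U=W^{\circ\beta}$ is $\rho^\beta$-locally dense. It then applies admissibility of $q$ to this \emph{new} graphon with the \emph{new} parameter, via $t(F,W^{\circ q'})=t(F,U^{\circ q})\ge(\rho^\beta)^{qe(F)}$.

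You instead work at the level of the edge-product $W_F$ on the probability space $[0,1]^{V(F)}$ and use monotonicity of $L^p$ norms. So you invoke admissibility of $q$ only for the same $W$ and the same $\rho$. This buys three things:
\begin{enumerate}
\item It gives the sharper intermediate bound $t(F,W^{\circ q'})\ge t(F,W^{\circ q})^{q'/q}$ for every graphon $W$.
\item It needs no closure property of the class of $\rho$-locally dense graphons.
\item It therefore carries over verbatim to restricted host classes that are not closed under Hadamard powers. An example is $\rho$-locally dense $\rho$-regular graphons, where $W^{\circ\beta}$ typically loses regularity.
\end{enumerate}
The paper's route, on the other hand, uses the same mechanism as the proof of \cref{thm:comparison-intro}, where moving the power onto the host graphon is genuinely needed.

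A minor point about your last sentence: the hedge is unnecessary. The dominated-convergence argument from the proof of \cref{prop:endpoint} works at any $q_0\ge1$, so the infimum is always attained and the admissible set is exactly $[\kappa_{\mathrm{KNRS}}(F),\infty)$.
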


\begin{proof}
Let $W$ be $\rho$-locally dense and put $\beta=q'/q\ge1$. By
\cref{lem:power-locally-dense}, $U=W^{\circ\beta}$ is
$\rho^\beta$-locally dense. Since $q$ is admissible,
\[
        t(F,W^{\circ q'})
        =
        t(F,U^{\circ q})
        \ge
        (\rho^\beta)^{q e(F)}
        =
        \rho^{q'e(F)}.
\]
\end{proof}

The original KNRS property is exactly the endpoint case.

\begin{proposition}\label{prop:endpoint}
For a graph $F$ with $e(F)>0$, the following are equivalent.
\begin{enumerate}
    \item $F$ is KNRS.
    \item $1$ is admissible for $F$.
    \item $\kappa_{\mathrm{KNRS}}(F)=1$.
\end{enumerate}
\end{proposition}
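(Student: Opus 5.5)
The equivalences (1)$\Leftrightarrow$(2) and (2)$\Rightarrow$(3) are formal, so the real content is (3)$\Rightarrow$(2). I would prove the cycle (1)$\Rightarrow$(2)$\Rightarrow$(3)$\Rightarrow$(1).

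\textbf{(1)$\Leftrightarrow$(2).} This is immediate from the definitions. With $q=1$ we have $W^{\circ 1}=W$, so admissibility of $1$ says exactly that $t(F,W)\ge\rho^{e(F)}$ for every $\rho$-locally dense $W$. That is the definition of KNRS.

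\textbf{(2)$\Rightarrow$(3).} The infimum is taken over a subset of $[1,\infty)$, so $\kappa_{\mathrm{KNRS}}(F)\ge1$. If $1$ is admissible, the infimum is attained at $1$.

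\textbf{(3)$\Rightarrow$(2).} By \cref{prop:ray}, every $q>1$ is admissible. Fix a $\rho$-locally dense graphon $W$. For every $q>1$ we have
\[
        t(F,W^{\circ q})\ge\rho^{qe(F)}.
\]
I would let $q\downarrow1$. The right-hand side tends to $\rho^{e(F)}$; if $\rho=0$ there is nothing to prove. On the left, the integrand is $W_F(\mathbf x)^q$, where $W_F$ takes values in $[0,1]$. As $q\downarrow1$ it converges pointwise to $W_F(\mathbf x)$; this also holds where $W_F=0$, since $0^q=0$ for $q>0$. The integrand is bounded by $1$, so dominated convergence gives
\[
        t(F,W^{\circ q})\to t(F,W).
\]
Hence $t(F,W)\ge\rho^{e(F)}$, so $1$ is admissible.

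The only step needing care is this limit. The key point is that $0\le W_F\le1$ makes $\mathbf 1$ an integrable dominating function on the probability space $[0,1]^{V(F)}$. No other obstacle arises.
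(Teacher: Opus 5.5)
Your proposal is correct and follows essentially the paper's argument: the only real content is (3)$\Rightarrow$(2), which both you and the paper obtain by letting admissible exponents decrease to $1$ and applying dominated convergence with the bound $0\le W_F\le 1$. The paper picks admissible $q_r\le 1+1/r$ directly from the definition of the infimum rather than invoking \cref{prop:ray} to make every $q>1$ admissible, but this difference is cosmetic.
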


\begin{proof}
The equivalence of the first two statements is the definition. If $1$ is
admissible, then clearly $\kappa_{\mathrm{KNRS}}(F)=1$. Conversely, suppose
that $\kappa_{\mathrm{KNRS}}(F)=1$. For every integer $r\ge1$, choose an
admissible $q_r\le 1+1/r$. Then $q_r\to1$. Fix a $\rho$-locally dense
graphon $W$. Since $0\le W\le1$, dominated convergence gives
\[
        t(F,W^{\circ q_r})\longrightarrow t(F,W),
\]
while $\rho^{q_r e(F)}\to \rho^{e(F)}$. Passing to the limit in the
admissibility inequality for $q_r$ gives
\[
        t(F,W)\ge \rho^{e(F)}.
\]
Thus $F$ is KNRS.
\end{proof}

\bibliographystyle{plainnat}
\bibliography{ref}

@article{BradacSudakovWigderson2024,
  title   = {Counting subgraphs in locally dense graphs},
  author  = {Brada{\v{c}}, Domagoj and Sudakov, Benny and Wigderson, Yuval},
  journal = {Israel Journal of Mathematics},
  year    = {2025},
  note    = {To appear}
}

@article{ChenLinMa2024,
  title   = {{Kohayakawa--Nagle--R{\"o}dl--Schacht} conjecture for subdivisions},
  author  = {Chen, Hao and Lin, Yupeng and Ma, Jie},
  journal = {arXiv preprint arXiv:2407.10861},
  year    = {2024}
}

@article{ConlonKimLeeLee2018,
  title   = {Some advances on {Sidorenko}'s conjecture},
  author  = {Conlon, David and Kim, Jeong Han and Lee, Choongbum and Lee, Joonkyung},
  journal = {Journal of the London Mathematical Society},
  volume  = {98},
  number  = {3},
  pages   = {593--608},
  year    = {2018},
  doi     = {10.1112/jlms.12142}
}

@article{ConlonLee2021,
  title   = {{Sidorenko}'s conjecture for blow-ups},
  author  = {Conlon, David and Lee, Joonkyung},
  journal = {Discrete Analysis},
  year    = {2021},
  pages   = {Paper No. 2, 13 pp.},
  doi     = {10.19086/da.21472}
}

@article{ErdosSimonovits1966,
  title   = {A limit theorem in graph theory},
  author  = {Erd{\H{o}}s, Paul and Simonovits, Mikl{\'o}s},
  journal = {Studia Scientiarum Mathematicarum Hungarica},
  volume  = {1},
  pages   = {51--57},
  year    = {1966}
}

@article{ImLiLiu2026,
  title   = {{Sidorenko}'s conjecture for subdivisions and theta substitutions},
  author  = {Im, Seonghyuk and Li, Ruonan and Liu, Hong},
  journal = {Combinatorics, Probability and Computing},
  volume  = {35},
  number  = {2},
  pages   = {269--279},
  year    = {2026},
  doi     = {10.1017/S0963548325100242}
}

@article{KNRS2010,
  title   = {Weak hypergraph regularity and linear hypergraphs},
  author  = {Kohayakawa, Yoshiharu and Nagle, Brendan and R{\"o}dl, Vojt{\v{e}}ch and Schacht, Mathias},
  journal = {Journal of Combinatorial Theory, Series B},
  volume  = {100},
  number  = {2},
  pages   = {151--160},
  year    = {2010},
  doi     = {10.1016/j.jctb.2009.05.005}
}

@article{Lee2021,
  title   = {On some graph densities in locally dense graphs},
  author  = {Lee, Joonkyung},
  journal = {Random Structures \& Algorithms},
  volume  = {58},
  number  = {2},
  pages   = {322--344},
  year    = {2021},
  doi     = {10.1002/rsa.20974}
}

@article{Reiher2014,
  title   = {Counting odd cycles in locally dense graphs},
  author  = {Reiher, Christian},
  journal = {Journal of Combinatorial Theory, Series B},
  volume  = {105},
  pages   = {1--5},
  year    = {2014},
  doi     = {10.1016/j.jctb.2013.12.002}
}

@article{Sidorenko1993,
  title   = {A correlation inequality for bipartite graphs},
  author  = {Sidorenko, Alexander},
  journal = {Graphs and Combinatorics},
  volume  = {9},
  number  = {2},
  pages   = {201--204},
  year    = {1993},
  doi     = {10.1007/BF02988307}
}

@article{SidorenkoDNN2021,
  title   = {{Inequalities for Doubly Nonnegative Functions}},
  author  = {Sidorenko, Alexander},
  journal = {The Electronic Journal of Combinatorics},
  volume  = {28},
  number  = {1},
  pages   = {\#P1.32},
  year    = {2021},
  doi     = {10.37236/8947}
}

@article{KiemParczykSpiegel2024,
  title   = {{Forcing Graphs to Be Forcing}},
  author  = {Kiem, Aldo and Parczyk, Olaf and Spiegel, Christoph},
  journal = {arXiv preprint arXiv:2412.12904},
  year    = {2024}
}
\newpage
\appendix

\section{Proof of the relations between the notions}\label{app:relations}

\begin{proof}[Proof of \cref{prop:notion-relations}]
For (1), let $W$ be a $\rho$-locally dense graphon. Taking $S=[0,1]$ in
 the definition of local density gives
\[
        t(K_2,W)=\int_{[0,1]^2}W(x,y)\dd x\dd y\ge \rho.
\]
If $G$ is Sidorenko, then
\[
        t(G,W)\ge t(K_2,W)^{e(G)}\ge \rho^{e(G)}.
\]
Thus $G$ is KNRS.

Part (2) is immediate, since regular-KNRS tests the same inequality on the
smaller class of $\rho$-locally dense, $\rho$-regular graphons.

For (3), let $W$ be a $\rho$-locally dense, $\rho$-regular graphon. By the
copositive/positive-semidefinite characterization of regular locally dense
graphons, $W-\rho\1$ is positive semidefinite
\citep[Corollary~2.15]{BradacSudakovWigderson2024}. The constant kernel
$\rho\1$ is also positive semidefinite, hence
\[
        W=(W-\rho\1)+\rho\1
\]
is positive semidefinite. Since $W\ge0$, the kernel $W$ is doubly
nonnegative. Moreover, $\|W\|_1=\rho$, by $\rho$-regularity.
Sidorenko-goodness gives
\[
        t(G,W)\ge \|W\|_1^{e(G)}=\rho^{e(G)}.
\]
Thus $G$ is regular-KNRS.

For (4), by the standard regular reduction for Sidorenko's conjecture, it
is enough to verify the Sidorenko inequality for $\Sub(G)$ in every
$p$-regular graphon $W$; see, for instance,
\citep[Lemma~2.2]{ImLiLiu2026}. Given such a $W$, define
\[
        U(x,y)=W^{[2]}(x,y)
        =
        \int_0^1 W(x,z)W(z,y)\dd z .
\]
Then $U$ is $p^2$-regular. Indeed,
\[
        \int_0^1 U(x,y)\dd y
        =
        \int_0^1 W(x,z)
        \left(\int_0^1 W(z,y)\dd y\right)\dd z
        =
        p\int_0^1 W(x,z)\dd z
        =
        p^2.
\]
The kernel $U$ is also positive semidefinite, since it is the square of the
self-adjoint integral operator with kernel $W$. Moreover, $U-p^2\1$ is
positive semidefinite. To see this, write any bounded real-valued function
$f$ as
\[
        f=f_0+c\1,
        \qquad
        \int_0^1 f_0=0.
\]
Since $U\1=p^2\1$, the cross terms vanish and
\[
        \langle f,(U-p^2\1)f\rangle
        =
        \langle f_0,Uf_0\rangle
        \ge0.
\]
Thus $U-p^2\1$ is positive semidefinite, hence copositive, so $U$ is
$p^2$-locally dense. Therefore $U$ is $p^2$-locally dense and
$p^2$-regular. By regular-KNRS of $G$,
\[
        t(G,U)\ge (p^2)^{e(G)}.
\]
Finally,
\[
        t(\Sub(G),W)=t(G,U),
\]
because each subdivided edge contributes the length-two path kernel
$W^{[2]}$. Hence
\[
        t(\Sub(G),W)
        \ge
        p^{2e(G)}
        =
        p^{e(\Sub(G))}.
\]
This proves that $\Sub(G)$ is Sidorenko.
\end{proof}
\end{document}